\newtheorem{thm}{Theorem}[section]
\newtheorem{prop}[thm]{Proposition}
\newtheorem{lem}[thm]{Lemma}
\theoremstyle{remark}
\newtheorem{rem}[thm]{Remark}
\renewcommand{\le}{\leqslant}
\renewcommand{\ge}{\geqslant}
\newcommand{\mcl}{\mathcal}
\newcommand{\E}{\mathbb{E}}
\newcommand{\EE}{\mathbf{E}}
\newcommand{\EEo}{\mathbf{E}^\omega}
\newcommand{\N}{\mathbb{N}}
\renewcommand{\l}{\mathsf{d}}
\newcommand{\Ll}{\left}
\newcommand{\Rr}{\right}
\newcommand{\1}{\mathbf{1}}
\newcommand{\R}{\mathbb{R}}
\newcommand{\Z}{\mathbb{Z}}
\renewcommand{\P}{\mathbb{P}}
\newcommand{\PP}{\mathbf{P}}
\newcommand{\PPo}{\mathbf{P}^\omega}
\newcommand{\ov}{\overline}
\newcommand{\td}{\tilde}
\newcommand{\eps}{\varepsilon}
\def\d{{\mathrm{d}}}
\newcommand{\mfk}{\mathfrak}
\newcommand{\la}{\langle}
\newcommand{\ra}{\rangle}
\newcommand{\dr}{\partial}
\renewcommand{\o}{\omega}
\DeclareMathOperator{\arsinh}{arsinh}
\title[martingale CLT and quantitative homogenization]{Kantorovich distance in the martingale CLT and quantitative homogenization of parabolic equations with random coefficients}
\author{Jean-Christophe Mourrat}
\address{Ecole polytechnique fédérale de Lausanne, institut de mathématiques, station 8, 1015 Lausanne, Switzerland}
\begin{document}
\begin{abstract}

The article begins with a quantitative version of the martingale central limit theorem, in terms of the Kantorovich distance. This result is then used in the study of the homogenization of discrete parabolic equations with random i.i.d.\ coefficients. For smooth initial condition, the rescaled solution of such an equation, once averaged over the randomness, is shown to converge polynomially fast to the solution of the homogenized equation, with an explicit exponent depending only on the dimension. Polynomial rate of homogenization for the averaged heat kernel, with an explicit exponent, is then derived. Similar results for elliptic equations are also presented.

\bigskip

\noindent \textsc{MSC 2010:} 35B27, 35K05, 60G44, 60F05, 60K37.

\medskip

\noindent \textsc{Keywords:} quantitative homogenization, martingale, central limit theorem, random walk in random environment.

\end{abstract}
\maketitle
%
%
%
%
%
%
%
%
\section{Introduction}
\label{s:intro}
\setcounter{equation}{0}
\subsection{Main results}
The main goal of this article is to give quantitative estimates in the homogenization of discrete divergence form operators with random coefficients. Writing $\mathbb{B}$ for the set of edges of $\Z^d$, we let $\omega = (\omega_e)_{e \in \mathbb{B}}$ be a family of i.i.d.\ random variables, assumed to be uniformly bounded away from $0$ and infinity, and whose joint distribution will be written $\P$ (with associated expectation $\E$). The operator whose homogenization properties we wish to investigate is
\begin{equation}
\label{defLom}
L^\omega f(x) = \sum_{y \sim x} \omega_{x,y} (f(y)-f(x)) \qquad (x \in \Z^d),
\end{equation}
where we write $y \sim x$ when $x,y \in \Z^d$ are nearest neighbours. For a bounded continuous $f : \R^d \to \R$, we consider $u^{(\eps)}$ the solution of
\begin{equation*}
\label{parabeqeps}
\tag{DPE$^\omega_\eps$}
\left\{
\begin{array}{ll}
\displaystyle{\frac{\dr u^{(\eps)}}{\dr t}  = L^\omega u^{(\eps)}    } & \text{on } \R_+ \times \Z^d, \\
\\
\displaystyle{   u^{(\eps)}(0,\cdot) = f(\eps\  \cdot)   } & \text{on } \Z^d,
\end{array}
\right.	
\end{equation*}
and $u_\eps(t,x)  = u^{(\eps)}(\eps^{-2} t,\lfloor \eps^{-1} x \rfloor)$. There exists a symmetric positive-definite matrix $\ov{A}$ (independent of $f$) such that the function $u_\eps$ converges, as $\eps$ tends to $0$, to the function $\ov{u}$ solution of
\begin{equation*}
\label{cparabeq}
\tag{CPE}
\left\{
\begin{array}{ll}
\displaystyle{\frac{\dr \ov{u}}{\dr t}  = \frac{1}{2} \nabla \cdot \Ll( \ov{A}\nabla \ov{u} \Rr)   } & \text{on } (0,+\infty) \times \R^d, \\
\\
\displaystyle{   \ov{u}(0,\cdot) = f   } & \text{on } \R^d.
\end{array}
\right.	
\end{equation*}
The notions of being a solution to \eqref{parabeqeps} or \eqref{cparabeq}, and of the convergence of $u_\eps$ to $\ov{u}$, will be made precise later on. For a bounded measurable function $f : \R^d \to \R$ and $\alpha = (\alpha_1, \ldots, \alpha_d) \in \N^d$, we call 
$$
\dr_{x_1^{\alpha_1} \ldots x_d^{\alpha_d}} f = \frac{\dr^{|\alpha|_1} f}{\dr{x_1^{\alpha_1}} \cdots\dr{x_d^{\alpha_d}}} \qquad \textstyle{(|\alpha|_1 = \sum_j \alpha_j)}
$$
a \emph{weak derivative} of order $|\alpha|_1$, where the derivative is understood in the sense of distributions. 

Here and below, we write $\lfloor x \rfloor$ for the integer part of $x$, $a \wedge b = \min(a,b)$, $a \vee b = \max(a,b)$, $\log_+(x) = \log(x) \vee 1$, and $|\xi|$ for the $L^2$ norm of $\xi \in \R^d$. The main purpose of this paper is to prove the following theorems.
\begin{thm}
\label{t:main}
Let $m = \lfloor d/2 \rfloor + 3$ and $\delta > 0$. There exist constants $C_\delta$ (which may depend on the dimension) and $q$ such that, if the weak derivatives of order $m$ of $f$ are in $L^2(\R^d)$, then for any $\eps > 0$, $t > 0$ and $x \in \R^d$, one has
\begin{multline}
\label{e:main}
\Ll|\E[u_\eps(t,x)] - \ov{u}(t,x) \Rr| \\
\le \sum_{j=1}^d \|\dr_{x_j} f \|_\infty \ \eps +  C_\delta \  (t+\sqrt{t})  \Ll(\|f\|_2 +   \sum_{j=1}^d \|\dr_{x_j^m} f\|_2 \Rr) \ \Psi_{q,\delta}\Ll(\frac{\eps^2}{{t}}\Rr),
\end{multline}
where
\begin{equation}
\label{defpsi}
\Psi_{q,\delta}(u) = 
\Ll|
\begin{array}{ll}
u^{1/4} & \text{if } d =  1, \\
\log^q_+(u^{-1})\ u^{1/4} & \text{if } d =  2, \\
u^{1/2-\delta} & \text{if } d \ge 3.
\end{array}
\Rr.
\end{equation}
\end{thm}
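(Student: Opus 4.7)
The plan is to use the probabilistic representation $u^{(\eps)}(s,y) = \EEo_y[f(\eps X_s)]$, where $(X_s)_{s\ge 0}$ is the continuous-time random walk on $\Z^d$ generated by $L^\omega$. Correspondingly, $\ov u(t,x) = \E[f(x + \sqrt{t}\,\ov A^{1/2}Z)]$ for a standard Gaussian $Z$. Setting $y_\eps := \lfloor \eps^{-1}x\rfloor$ and adding and subtracting $\ov u(t,\eps y_\eps)$ splits $\E[u_\eps(t,x)] - \ov u(t,x)$ into (i) a deterministic initial-position shift $\ov u(t,\eps y_\eps) - \ov u(t,x)$, bounded by $|x - \eps y_\eps|\cdot\|\nabla f\|_\infty \le \sum_j\|\dr_{x_j}f\|_\infty\,\eps$ via $L^\infty$-contractivity of the heat semigroup, producing the first summand in \eqref{e:main}, and (ii) the annealed CLT error $\E\EEo_{y_\eps}[f(\eps X_{\eps^{-2}t})] - \E[f(\eps y_\eps + \sqrt{t}\,\ov A^{1/2}Z)]$.

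To control this CLT error I would invoke the Kipnis--Varadhan-type martingale decomposition attached to the environment seen from the particle, $X_s - X_0 = M_s + \chi_s$, where $M$ is a square-integrable martingale under the annealed law with $\E\langle M\rangle_s = s\,\ov A$ and $\chi_s$ is a stationary corrector process whose moment bounds depend on $d$. Applying the quantitative Kantorovich martingale CLT developed earlier in this paper to each coordinate of $\eps M_{\eps^{-2}t}$, and combining with $L^p$-bounds on $\chi$ derived from the i.i.d.\ structure of $\omega$, should produce a Wasserstein-type bound between the annealed law of $\eps X_{\eps^{-2}t}$ shifted by $\eps y_\eps$ and $\sqrt{t}\,\ov A^{1/2}Z$, at the dimension-dependent rate $\Psi_{q,\delta}(\eps^2/t)$.

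To convert this $W_1$-type information into a bound on $|\E[f(\cdot)] - \E[f(\cdot)]|$ for the Sobolev test function $f$, I would pass to Fourier. Writing the difference as $(2\pi)^{-d}\int \hat f(\xi)\, e^{i\xi\cdot x}(\phi_1(\xi) - \phi_2(\xi))\,d\xi$ with $\phi_1,\phi_2$ the corresponding characteristic functions and splitting at $|\xi| = R$, the low-frequency piece is controlled by $|\phi_1 - \phi_2|(\xi) \le |\xi|\cdot W_1$, while the high-frequency tail is absorbed using the Gaussian decay of $\phi_2$ together with the hypotheses $f,\,\dr_{x_j^m}f \in L^2$. The threshold $m = \lfloor d/2\rfloor + 3$ is chosen just large enough that $|\xi|^2\hat f \in L^1$ via Cauchy--Schwarz with weight $(1+|\xi|)^{-d-1}$, and optimizing $R = R(\eps,t)$ yields the prefactor $(t + \sqrt{t})$.

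The main obstacle will be obtaining the dimension-dependent rate $\Psi_{q,\delta}$ itself. Since the corrector of an i.i.d.\ conductance model is stationary with finite variance only for $d \ge 3$, has only logarithmically growing variance in $d = 2$ and is diffusive in $d = 1$, the bracket-variance input to the quantitative martingale CLT and the moment estimates on $\chi$ must be tuned case by case to produce the exponent $1/4$ for $d = 1$, the $\log_+^q$ correction for $d = 2$, and the near-optimal $1/2 - \delta$ for $d \ge 3$; matching these dimensional regimes against the rates furnished by the Kantorovich CLT is where the genuine technical work lies.
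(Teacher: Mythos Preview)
Your overall architecture (probabilistic representation, lattice shift, Fourier pass, Cauchy--Schwarz with the Sobolev weight $m=\lfloor d/2\rfloor+3$) matches the paper, but two points diverge in a way that actually matters for the stated rate.

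First, the paper never produces a $d$-dimensional $W_1$ bound on the law of $\eps X_{\eps^{-2}t}$. It only controls one-dimensional projections $\xi\cdot X_t/\sqrt t$ for arbitrary unit $\xi$ (Theorem~\ref{t:rw}), and the Fourier formula of Lemma~\ref{l:fourier} is precisely the device that lifts these projection bounds to the multivariate test function $f$ (a quantitative Cram\'er--Wold). So your intermediate goal of a multivariate $W_1$ estimate from coordinate-wise information is neither needed nor obviously available.

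Second, and more seriously, your bound $|\phi_1(\xi)-\phi_2(\xi)|\le |\xi|\,W_1$ uses the ordinary Kantorovich distance $\l_1$. With $\l_1$ the martingale CLT (inequality~\eqref{e:clt}) carries the term $\|\langle M\rangle_1-1\|_1^{1/2}$, which in the present model gives at best $t^{-1/4}$ for $d\ge 3$ (see the remark after Theorem~\ref{t:rw}). The exponent $1/2-\delta$ in $\Psi_{q,\delta}$ comes from using the $k$-Kantorovich distance $\l_{1,k}$ instead, whose bound~\eqref{e:clt2} has the unsquared $\|\langle M\rangle_1-1\|_1$. Concretely, the paper applies \eqref{compfl1k} to the $1$-Lipschitz-in-projection function $x\mapsto e^{i|\xi|x}$, whose \emph{second} derivative is bounded by $|\xi|^2$, obtaining
\[
|\hat\nu_\eps(\xi)-\hat\nu_0(\xi)|\le C\,|\xi|\,(|\xi|\vee 1)\,\Psi_{q,\delta}(\eps^2/t),
\]
and then integrates $|\hat f_t(\xi)|\,|\xi|(|\xi|\vee 1)$ directly. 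There is no high/low split or optimization in $R$; the factor $(t+\sqrt t)$ falls out of the change of variables $\hat f_t(\xi)=t^{-d/2}\hat f(\xi/\sqrt t)$. Without the second-derivative (i.e.\ $\l_{1,k}$) input, your scheme would reproduce the structure of \eqref{e:main} but with a strictly worse $\Psi$ in dimensions $d\ge 2$.
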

\begin{thm}
\label{t:main2}
Let $p_t^\o(x,y)$ be the heat kernel associated to $L^\omega$, 
$$
\ov{p}_t(x,y) = \frac{1}{(2 \pi t)^{d/2} \sqrt{\det \ov{A}}} \exp\Ll( - \frac{1}{2t} (y-x)^\mathsf{T} \  \ov{A}^{-1} (y-x) \Rr)
$$
be the heat kernel associated to $\frac{1}{2} \nabla \cdot \ov{A} \nabla$, and $\delta > 0$. There exist constants $c > 0$ (independent of $\delta$), $q$, $C_\delta$, $\eps_\delta > 0$ such that for any $\eps > 0$, $t > 0$ satisfying $\eps/\sqrt{t}\le \eps_\delta$ and any $x \in \R^d$, one has
\begin{multline}
\label{e:main2}
\Ll|  \eps^{-d} \ \E\Ll[ p^\omega_{\eps^{-2} t}(0, \lfloor \eps^{-1} x \rfloor)\Rr] - \ov{p}_t(0,x)  \Rr| \\
\le \frac{C_\delta}{t^{d/2}} \ \Ll(\Psi_{q,\delta}\Ll(\frac{\eps^2}{t}\Rr)\Rr)^{1/(d+3)} \exp\Ll[ -c \Ll(\frac{|x|^2}{t} \wedge |\eps^{-1} x| \Rr) \Rr].
\end{multline}
In particular, for any $s > 0$, one has
\begin{equation*}
\sup_{x \in \R^d} \sup_{t \ge s} \Ll|  \eps^{-d} \ \E\Ll[ p^\omega_{\eps^{-2} t}(0, \lfloor \eps^{-1} x \rfloor)\Rr] - \ov{p}_t(0,x)  \Rr| = O\Ll( \Ll( \Psi_{q,\delta}(\eps^2)\Rr)^{1/(d+3)} \Rr)
\end{equation*}
as $\eps$ tends to $0$.
\end{thm}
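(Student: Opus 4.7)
The overall strategy is to apply Theorem~\ref{t:main} with an initial condition $f$ sharply concentrated near the target point $x$, and to deduce the pointwise heat-kernel estimate by sending the concentration scale to zero in an optimized way. Concretely, I fix a smooth nonnegative bump $\phi$, compactly supported, with $\int \phi = 1$, and for a scale $\eta > 0$ to be chosen later I set $f_\eta(z) := \eta^{-d}\,\phi((z-x)/\eta)$. With this choice the homogenized value $\ov u(t, 0) = \int \ov p_t(0, z) f_\eta(z) \, \d z$ is just a mollification of $\ov p_t(0, x)$ at scale $\eta$, and -- up to a Riemann-sum correction controlled by gradient bounds of Delmotte type on $y \mapsto \E[p^\o_{\eps^{-2}t}(0,y)]$ -- $\E[u_\eps(t,0)] = \sum_y \E[p^\o_{\eps^{-2}t}(0,y)]\, f_\eta(\eps y)$ is the corresponding mollification of the rescaled averaged discrete kernel $z \mapsto \eps^{-d}\,\E[p^\o_{\eps^{-2}t}(0, \lfloor z/\eps\rfloor)]$. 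Theorem~\ref{t:main} bounds the difference of these two mollifications by a quantity of order $t\,\eta^{-d/2-m}\,\Psi_{q,\delta}(\eps^2/t)$, using $\|f_\eta\|_2 \simeq \eta^{-d/2}$ and $\|\dr_{x_j^m}f_\eta\|_2 \simeq \eta^{-d/2-m}$.

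The two mollification remainders, obtained by Taylor-expanding $\ov p_t(0,\cdot)$ and $y \mapsto \E[p^\o_{\eps^{-2}t}(0,y)]$ respectively, are polynomial in $\eta$ and carry a Gaussian weight in $|x|/\sqrt{t}$. Equalizing them with the Theorem~\ref{t:main} contribution, and invoking the specific value $m = \lfloor d/2 \rfloor + 3$, produces the target bound of the form $t^{-d/2}\,\Psi_{q,\delta}(\eps^2/t)^{1/(d+3)}$ in the diagonal regime $|x| \lesssim \sqrt{t}$; the hypothesis $\eps/\sqrt{t} \le \eps_\delta$ is precisely what ensures that the optimal $\eta$ lies in the valid range $\eps \ll \eta \ll \sqrt{t}$. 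The full exponential factor $\exp[-c(|x|^2/t \wedge |\eps^{-1}x|)]$ then emerges from a three-regime split: for $|x| \lesssim \sqrt{t}$ the factor is $O(1)$ and harmless; for $\sqrt t \lesssim |x| \lesssim t/\eps$ one bounds the difference by the larger of $\ov p_t$ and $\E[p^\o_{\eps^{-2}t}]$, each of which satisfies a Gaussian upper bound carrying $\exp(-c|x|^2/t)$; and for $|x| \gtrsim t/\eps$ one invokes the elementary Poisson-type bound $p^\o_s(0,y) \le e^{-c|y|}$ valid when $|y| \gg s$, which furnishes the $\exp(-c|\eps^{-1} x|)$ branch of the minimum. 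The uniform-in-$(t,x)$ statement then follows from the first assertion by monotonicity of $\Psi_{q,\delta}(\eps^2/t)$ in $t \ge s$ and by absorbing the bounded prefactors into the $O(\cdot)$.

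The main obstacle will be transporting the Gaussian/Poisson decay cleanly through every step: Theorem~\ref{t:main} itself provides no spatial decay, so the decay factor must come entirely from the mollification remainders and from off-diagonal upper bounds on the two kernels. In particular, one must verify that both the Riemann-sum correction and the Taylor remainder on the discrete side are controlled by Gaussian-weighted quantities, and that the constant $c$ in the final exponent can be taken independent of $\delta$; the latter is plausible since $\delta$ enters only through the factor $\Psi_{q,\delta}$, whereas the tail bounds invoked for the individual kernels are independent of Theorem~\ref{t:main} altogether.
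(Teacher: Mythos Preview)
Your overall strategy---mollify with a bump $f_\eta$, apply Theorem~\ref{t:main}, and compare the mollified quantities to the pointwise kernels via gradient bounds---is exactly the paper's. Two points need correction.

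First, the numerology. Applying Theorem~\ref{t:main} \emph{as stated} gives a contribution of order $\eta^{-d/2-m}\,\Psi_{q,\delta}$ (at $t=1$), and balancing this against the mollification remainder of order~$\eta$ yields the exponent $1/(d/2+m+1)$. With $m=\lfloor d/2\rfloor+3$ this is $1/(d+4)$ for even $d$ and $2/(2d+7)$ for odd $d$, never $1/(d+3)$. The paper instead goes back to the \emph{proof} of Theorem~\ref{t:main} and uses the intermediate Fourier bound $\int|\hat f_\eta(\xi)|(1+|\xi|^2)\,\mathrm{d}\xi$, which scales as $\eta^{-(d+2)}$; balancing $\eta^{-(d+2)}\Psi_{q,\delta}$ against $\eta$ is what gives the stated exponent $1/(d+3)$.

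Second, and more seriously, your three-regime split with $\eps$-independent thresholds cannot produce the bound~\eqref{e:main2}. In the intermediate regime $\sqrt t\lesssim|x|\lesssim t/\eps$ you bound the difference by the maximum of the two kernels, obtaining $t^{-d/2}\exp(-c|x|^2/t)$ \emph{without} the prefactor $(\Psi_{q,\delta})^{1/(d+3)}$; since that prefactor tends to zero with $\eps$, it cannot be manufactured from an $\eps$-uniform tail bound. The paper's remedy is to make the mollification scale depend on~$x$. Because the mollification remainders already carry the weight $G(x)=\exp[-c_3(|x|^2\wedge|\eps^{-1}x|)]$ (this is where the Conlon--Naddaf/Delmotte--Deuschel gradient bound on the \emph{averaged} kernel, Theorem~\ref{t:diffqt}, is essential---not merely a Delmotte-type bound on the kernel itself), the total error at $t=1$ has the form $\Psi_{q,\delta}\,r^{-(d+2)}+r\,G(x)$. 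Choosing $r^{d+3}\sim\Psi_{q,\delta}/G(x)$ (capped so that $r\le1$) yields both the $(\Psi_{q,\delta})^{1/(d+3)}$ factor and the exponential simultaneously, with $c=c_3(1-\tfrac{1}{d+3})$. A crude upper bound on each kernel separately is invoked only when $|x|^2\wedge|\eps^{-1}x|\gtrsim\log(\Psi_{q,\delta}^{-1})$, at which point the $(\Psi_{q,\delta})^{1/(d+3)}$ factor can be extracted directly from the exponential tail. Your split can be repaired by moving the regime boundary to this $\eps$-dependent threshold, which then becomes essentially the paper's argument.
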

\begin{rem}
For a given smooth function $f$ and a fixed $t > 0$, the r.h.s.\ of \eqref{e:main} is of the order of
\begin{equation}
\label{boundrough1}
\left|
\begin{array}{ll}
	\sqrt{\eps} & \text{if } d = 1, \\
\log^q(\eps^{-1}) \sqrt{\eps} & \text{if } d = 2, \\
{\eps}^{1-\delta'} & \text{if } d \ge 3, \\
\end{array}
\right.
\end{equation}
where $\delta' = 2 \delta > 0$ is arbitrary. Similarly, for fixed $t$ and $x$, the r.h.s.\ of \eqref{e:main2} is of the order of 
\begin{equation}
\label{boundrough2}
\left|
\begin{array}{ll}
\eps^{1/8} & \text{if } d = 1, \\
\log^{q/5}(\eps^{-1}) \ \eps^{1/10} & \text{if } d = 2, \\
{\eps}^{1/(d+3)-\delta''} & \text{if } d \ge 3, \\
\end{array}
\right.
\end{equation}
where $\delta'' = 2\delta/(d+3) > 0$ is arbitrary. 
\end{rem}
\subsection{Context}
Homogenization problems have a very long story, going back at least to \cite{maxwell,rayleigh}. Rigorous proofs of homogenization for periodic environments were obtained in the 70's (see for instance \cite{blp} or \cite[Chapter~1]{jko} for references), and for random environments with \cite{kozlov1,yuri1,papavara1,kun}. 
Classical methods used to show homogenization typically rely on a compactness argument, or on the ergodic theorem, both approaches leaving the question of the rate of convergence untouched. 

For continuous space and periodic coefficients, \cite[Corollary~2.7]{jko} uses spectral methods to show that
$$
\Ll|  \eps^{-d} \ p^\omega_{\eps^{-2} t}(0,\eps^{-1} x) - \ov{p}_t(0,x)  \Rr| \le C \ \frac{\eps}{t^{(d+1)/2}}.
$$ 
For random coefficients, available results are much less precise. For continuous space, \cite{yuri} gives an algebraic speed of convergence of $u_\eps$ to $\ov{u}$ for the elliptic problem and $d \ge 3$, without providing an explicit exponent. In \cite{cafsou}, the much more general case of fully nonlinear elliptic equations is considered, and a speed of convergence of a logarithmic type is proved.

Here, we focus on the convergence of the \emph{average} of $u_\eps$ to $\ov{u}$. This approach has been considered in \cite{conspe} for the elliptic problem. There, it is shown that the suitably rescaled Green function, once averaged over the randomness of the coefficients, differs from the Green function of the homogenized equation by a power of $\eps$. The exponent obtained is implicit, and depends on the ellipticity condition assumed on the random coefficients. 

In contrast, Theorems~\ref{t:main} and \ref{t:main2} provide explicit exponents, that depend only on the dimension. I conjecture that the correct order of decay with $\eps$ in Theorem~\ref{t:main} should be
$$
\left|
\begin{array}{ll}
	\sqrt{\eps} & \text{if } d = 1, \\
\log(\eps^{-1}) \ \eps & \text{if } d = 2, \\
{\eps}& \text{if } d \ge 3. \\
\end{array}
\right.
$$
This differs notably from what is obtained in Theorem~\ref{t:main} only when $d = 2$. On the other hand, it may well be that the assumption of high regularity on $f$ is only an artefact of the methods employed.

The fact that 
\begin{equation}
\label{localclt}
\sup_{x \in \R^d} \sup_{t \ge s} \Ll|  \eps^{-d} \  p^\omega_{\eps^{-2} t}(0, \lfloor \eps^{-1} x \rfloor) - \ov{p}_t(0,x)  \Rr| \xrightarrow[\eps \to 0]{\text{a.s.}} 0
\end{equation}
is known at least since \cite{barham}, where the much more difficult case where the random coefficients are Bernoulli random variables is considered (in this context, the heat kernel should be considered only within the unique infinite percolation cluster). Yet, for strictly positive random coefficients, this convergence does not hold if the distribution of the random coefficients is allowed to have a fat tail close to $0$ and when $d \ge 4$ \cite{bbhk,bbou}. Under the same circumstance and when $p^\omega$ is replaced by its average in \eqref{localclt}, the convergence fails to hold in any dimension \cite{fonmat} (see however \cite[Proposition 7.2]{andr} for a nice way to get around this problem).

An evaluation of the gap between the average of $u_\eps$ and $\ov{u}$ naturally calls for estimates on the size of the random fluctuations of $u_\eps$ around its average. In this direction and for the elliptic problem, \cite{connadny} obtains algebraic decay of the variance of $u_\eps$ (integrated over space). The exponent obtained is implicit, and depends on the ellipticity conditions\footnote{A.\ Gloria has announced improved estimates on this problem.}.
\subsection{Our approach}
In order to prove Theorem~\ref{t:main}, we will use the representation of $u_\eps$ as the expected value over the paths of a random walk, that we write $(X_t)_{t \ge 0}$. This random walk has inhomogeneous jump rates given by the $(\omega_e)_{e \in \mathbb{B}}$, and $L^\omega$ is its infinitesimal generator. For instance, one has
$$
u_\eps(t,0) = \EEo_0\Ll[ f\Ll( \eps X_{\eps^{-2} t} \Rr) \Rr],
$$
where we write $\PPo_0$ for the distribution of the random walk starting from $0$, and $\EEo_0$ for its associated expectation. From the PDE's perspective, this approach can be seen as the method of characteristics, except that here those characteristics are random. 

From the probabilist's perspective, the (pointwise) convergence of $u_\eps$ to $\ov{u}$ is equivalent to the claim that the random walk, after diffusive scaling, satisfies a central limit theorem. Quantitative estimates should thus follow if one can provide with rates of convergence in this central limit theorem.

In \cite{berry}, it is shown that there exist constants $C,q \ge 0$ such that for any $\xi$ of unit norm,
\begin{equation}
\label{e:berry}
\sup_{x \in \R} \  \left| \P\PPo_0 \left[\frac{\xi \cdot X_t}{\sigma(\xi)  \sqrt{t}} \le x \right] - \Phi(x) \right| \le C
\left|
\begin{array}{ll}
t^{-1/10}  & \text{if } d = 1, \\
\log_+^q(t) \ t^{-1/10} & \text{if } d = 2, \\
\log_+(t)\ t^{-1/5} & \text{if } d = 3, \\
t^{-1/5}  & \text{if } d \ge 4,
\end{array}
\right.
\end{equation}
where $\Phi$ is the cumulative distribution function of the standard Gaussian random variable, and $\sigma(\xi) = \xi \cdot \ov{A}\xi$.

This result has two important weak points: (1) the rates are far from the usual $t^{-1/2}$ one obtains for sums of i.i.d.\ random variables, and (2) the theorem only gives information about the projections of $X_t$ onto a fixed vector. We shall find ways to overcome these two problems. 

The classical approach for the proof of a central limit theorem for the random walk consists in decomposing it as the sum of a martingale plus a remainder term, and then show that the martingale converges (after scaling) to a Gaussian random variable, while the remainder term becomes negligible in the limit. 

In view of this, what should be done is clear: we should first find a quantitative estimate on how small the remainder term is, and second, show that the martingale converges rapidly to a Gaussian. This is indeed the method used in \cite{berry}. The control of the remainder term given there is satisfactory, and the problem lies with the quantitative central limit theorem for the martingale part.

This quantitative central limit theorem relies on the fact that one can have a sharp control of the variance of the quadratic variation of the martingale. It is shown that, after scaling, this variance decays like $t^{-1}$ when $d \ge 4$, which is the best possible rate. However, given such a control, the quantitative CLT (due to \cite{hb70,ha88}) used there only yields a decay of $t^{-1/5}$ in this case. 

Surprisingly, this exponent $1/5$ is best possible \cite{martingale_CLT}. To overcome this obstruction, we derive new quantitative CLT's for martingales, that will not yield a Berry-Esseen type of estimate, but rather measure 
$$
\sup_{f \in \mfk{L}} \Ll|\E\EEo_0\Ll[ f\Ll( \frac{\xi \cdot X_t}{\sigma(\xi) \sqrt{t}} \Rr) \Rr] - \int f \ \d \Phi \Rr|,
$$
where $\mfk{L}$ is a class of functions. When $\mfk{L}$ is the class of bounded $1$-Lipschitz functions, the supremum is often called the \emph{Kantorovich(-Rubinstein) distance}. We also consider $\mfk{L}$ to be the class of bounded $\mcl{C}^2$ functions that have first derivative bounded by $1$ and second derivative bounded by $k$, and call it the \emph{$k$-Kantorovich distance}. The martingale CLT's obtained hold for general square-integrable martingales, and are of independent interest.

Once equipped with these martingale CLT's, we apply them to the one-dimen\-sional projections of the random walk $(X_t)$, and for $d \ge 3$, we obtain rates approaching the i.i.d.\ rate of $t^{-1/2}$. To do so, we use estimates derived in \cite{berry}, most importantly on the variance of the quadratic variation of the martingale. These in turn are consequences of the $L^p$ boundedness of the corrector (for $d \ge 3$, and with logarithmic corrections for $d = 2$), and of a spatial decorrelation property of this corrector, proved in \cite[Theorem~2.1 and Proposition~2.1]{glotto}. 

In order to obtain Theorem~\ref{t:main}, we need to carry the information obtained on the projections of $X_t$ to $X_t$ itself, in a kind of quantitative version of the Cramér–Wold theorem. This is achieved through Fourier analysis, at the price of requiring the existence of weak derivatives of higher order. 

The key observation that enables to go from Theorem~\ref{t:main} to Theorem~\ref{t:main2} is the high regularity of the averaged heat kernel. In contrast to the true heat kernel, the averaged one has a gradient which is bounded by a constant times the gradient of $\ov{p}$, as is proved in \cite{connadny,deldeu}.

The estimates due to \cite{glotto} are the only place where the assumptions of independence and uniform ellipticity of the coefficients come into play. In particular, if it is shown that these estimates are valid for certain correlated environments, then the present results automatically extend to this context. The present results should also extend to continuous space with only minor change, as long as the estimates of \cite{glotto} remain true in this setting\footnote{A.\ Gloria and F.\ Otto have indeed announced extensions of their results to the continuous setting, and to random environments satisfying a Dobrushin-Shlosman mixing condition.}. 

\subsection{Organization of the paper}
We introduce the ($k$-)Kantorovich and Kolmogorov distances in section~\ref{s:dist}. In section~\ref{s:clt}, we consider general square-inte\-grable martingales, and derive quantitative CLT's with respect to the ($k$-)Kanto\-rovich distances. We then apply these results to projections of the random walk $X_t$ in section~\ref{s:rw}. The homogenization setting is taken up in section~\ref{s:homog}, and Theorem~\ref{t:main} is proved. Theorem~\ref{t:main2} is then derived in section~\ref{s:hk}. Finally, similar results for the homogenization of elliptic equations are presented in section~\ref{s:ellipt}.

%
%
%
%
%
%
%
%
\section{Distances between probability measures}
\label{s:dist}
\setcounter{equation}{0}
A function $f : \R^m \to \R^n$ is said to be $k$-Lipschitz if for any $x,y \in \R^m$, one has $|f(y)-f(x)| \le k |y-x|$.
Let $\nu$, $\nu'$ be probability measures on $\R$, and let $F_\nu$, $F_{\nu'}$ be their respective cumulative distribution functions. We define the \emph{Kantorovich distance} between $\nu$ and $\nu'$ as
\begin{equation}
\label{defl1}
\l_1(\nu,\nu') = \sup\Ll\{ \Ll| \int f \d \nu - \int f \d \nu'  \Rr|, \ f \text{ bounded and 1-Lip.} \Rr\},
\end{equation}
and the \emph{Kolmogorov distance} between $\nu$ and $\nu'$ as
\begin{equation}
\label{deflinfty}
\l_\infty(\nu,\nu') = \sup_{x \in \R} \Ll| F_{\nu'}(x) - F_{\nu}(x) \Rr| = \|F_{\nu'} - F_{\nu}\|_\infty.
\end{equation}
The notation for the Kantorovich distance becomes more transparent once we notice that (see for instance \cite[Theorem~1.14 and (2.48)]{villani})
\begin{equation}
\label{defeql1}
\l_1(\nu,\nu') = \int \Ll| F_{\nu'}(x) - F_{\nu}(x) \Rr| \d x = \|F_{\nu'} - F_{\nu}\|_1.
\end{equation}
As we will see below, bounds in the martingale CLT are improved when measured with the Kantorovich distance instead of the Kolmogorov distance. We now introduce weaker forms of the Kantorovich distance, for which the rates of convergence will be even better. For any $k \in [0,+\infty]$, we define the \emph{$k$-Kantorovich distance} as
\begin{equation*}
\l_{1,k}(\nu,\nu') = \sup\Ll\{ \Ll| \int f \d \nu - \int f \d \nu'  \Rr|, \ f \in \mcl{C}_b^2(\R,\R), \|f'\|_\infty \le 1, \|f''\|_\infty \le k \Rr\},
\end{equation*}
where $\mcl{C}_b^2(\R,\R)$ is the set of bounded twice continuously differentiable functions from $\R$ to $\R$. For $k \le k'$, one has $\l_{1,k}\le \l_{1,k'} \le \l_{1,\infty} = \l_1$. Note that if $f \in \mcl{C}^2_b(\R,\R)$, then
\begin{equation}
\label{compfl1k}
\Ll|\int f \d \nu - \int f \d \nu'\Rr| \le \|f'\|_\infty \ \l_{1,\|f''\|_\infty/\|f'\|_\infty}(\nu,\nu').
\end{equation}

In the sequel, if $X$ follows the distribution $\nu$ and $Y$ the distribution $\nu'$, we may write $\l_1(X,Y)$ to denote $\l_1(\nu,\nu')$, or also $\l_1(X,F_{\nu'})$ if convenient. If $X$ and $Y$ are defined on the same probability space with probability measure $P$ and associated expectation $E$, then for any $1$-Lipschitz function $f$, we have
$$
\Ll|E[f(X)] - E[f(Y)]\Rr| \le E|f(X) - f(Y)| \le E |X-Y| ,
$$ 
and hence
\begin{equation}
\label{coupling}
\l_1(X,Y) \le E |X-Y|.
\end{equation}

Similarly, if $X$ follows the distribution $\nu$ and $Y$ the distribution $\nu'$, we write $\l_{1,k}(X,Y)$, $\l_{1,k}(X,F_{\nu'})$ or $\l_{1,k}(\nu,\nu')$ as convenient.

%
%
%
%
%
%
%
%
\section{Martingale CLT}
\label{s:clt}
\setcounter{equation}{0}

For a square-integrable (cadlag) martingale $(M_t)_{t \in [0,1]}$ defined with respect to the probability measure $P$ and the (right-continuous) filtration $(\mcl{F}_t)_{t \ge 0}$, we write $(\langle M \rangle_t)_{t \in [0,1]}$ for its predictable quadratic variation, 
\begin{equation*}
\Delta M(t) = M_t - \lim_{s \to t^-} M_s,
\end{equation*}
and
$$
L_{2p} = E\Ll[ \sum_{0 \le t \le 1} |\Delta M(t)|^{2p} \Rr] .
$$
Recall that we denote by $\Phi$ the cumulative distribution function of the standard Gaussian random variable. In \cite{ha88}, the following is proved.
\begin{thm}[\cite{ha88}]
\label{t:ha88}
For any $p > 1$, there exists $\ov{C}_p$ (independent of $M$) such that 
\begin{equation}
\label{e:ha88} 
\l_\infty(M_1,\Phi) \le \ov{C}_p \Ll( L_{2p}^{1/(2p+1)} + \|\la M \ra_1 - 1\|_p^{p/(2p+1)} \Rr).
\end{equation}
\end{thm}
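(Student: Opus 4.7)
My plan is to combine Esseen's smoothing inequality with a Fourier/It\^o analysis of the martingale's characteristic function. The first move is Esseen's inequality, which reduces bounding $\l_\infty(M_1,\Phi)$ to estimating
$$
\int_{-T}^T \frac{|\phi(u) - e^{-u^2/2}|}{|u|} \, du + \frac{1}{T}, \qquad \phi(u) := E[e^{iuM_1}],
$$
for a parameter $T>0$ that will be optimized at the end. This makes the estimate of the statement a consequence of a suitable pointwise bound on $|\phi(u)-e^{-u^2/2}|$ in $u$.

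To produce such a bound, I would apply It\^o's formula to the semimartingale $e^{iuM_t}$. For a purely continuous martingale with $\la M \ra_1 \equiv 1$, the process $\exp(iuM_t + u^2\la M \ra_t/2)$ is a complex martingale, so one would get $\phi(u)=e^{-u^2/2}$ on the nose. In general, It\^o gives
$$
\phi(u) = 1 - \tfrac{u^2}{2}\, E\!\int_0^1 e^{iuM_{s-}}\, d\la M\ra_s \;+\; E\!\sum_{0\le t \le 1} e^{iuM_{t-}}\!\left(e^{iu\Delta M(t)} - 1 - iu\Delta M(t) + \tfrac{u^2}{2}(\Delta M(t))^2\right).
$$
Subtracting the analogous Volterra identity satisfied by $e^{-u^2/2}$ (with $d\la M\ra_s$ replaced by $ds$) yields an integral equation for the difference $\phi(u)-e^{-u^2/2}$ whose forcing is controlled by (i) $\la M\ra_1 - 1$, giving a contribution of size $u^2 \|\la M\ra_1 - 1\|_p$ via H\"older, and (ii) the jump remainder term.

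The jump term is where the main difficulty lies and where the $L_{2p}$ norm appears. Using $|e^{iy}-1-iy+y^2/2|\lesssim |y|^2\wedge |y|^3$, and truncating jumps at a threshold $\delta>0$, the small-jump part is bounded by $|u|^3 \delta\, E[\la M\ra_1]$ and the large-jump part by $|u|^2\, E[\sum |\Delta M(t)|^2 \1_{|\Delta M(t)|>\delta}] \le |u|^2\,\delta^{-(2p-2)} L_{2p}$. Choosing $\delta$ optimally as a negative power of $|u|$ (and using Gronwall/Volterra to close the integral equation) then produces an estimate of the form $|\phi(u)-e^{-u^2/2}| \lesssim |u|^{a}\, L_{2p}^{b} + |u|^2\, \|\la M\ra_1-1\|_p$ with $b$ depending on $p$. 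Finally, optimizing $T$ in Esseen's inequality so that the two contributions balance the $1/T$ term yields exponents of the form $1/(2p+1)$ and $p/(2p+1)$, as in \eqref{e:ha88}.

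The main obstacle is the sharp tracking of exponents in the jump estimate: the precise power $1/(2p+1)$ comes out only for the optimal truncation threshold, and one must carefully interpolate between the $L^2$ control provided by $E[\la M\ra_1]$ and the $L^{2p}$ moment $L_{2p}$. A secondary nuisance is that the ``Gronwall step'' must be performed in the complex-valued integral equation without losing the oscillation $e^{iuM_{s-}}$; in practice this is handled by absorbing the $e^{iuM_{s-}}$ factors into supremum bounds and re-injecting the resulting linear equation into itself, which is exactly what Haeusler does in \cite{ha88}.
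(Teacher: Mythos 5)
You should first note that the paper does not prove this statement at all: Theorem~\ref{t:ha88} is imported verbatim from Haeusler \cite{ha88} (building on Heyde--Brown \cite{hb70}), and the paper in fact uses the stronger non-uniform version, Theorem~\ref{t:hj}, as a black box. So the only meaningful comparison is with Haeusler's original argument. Your general orientation --- Esseen smoothing plus an analysis of the characteristic function of the (suitably modified) martingale --- is indeed the right family of methods for the discrete-time statement, and the continuous-time case in \cite{ha88} is then obtained by a discretization/approximation procedure (the paper alludes to this when citing Theorem~\ref{t:hj}), not by It\^o calculus.

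However, two steps of your plan have genuine gaps. First, the claim that the bracket term contributes ``$u^2\|\la M\ra_1-1\|_p$ via H\"older'' is unjustified: the quantity to control is $E\int_0^1 e^{iuM_{s-}}\,\d(\la M\ra_s-s)$, and the total variation of the signed measure $\d(\la M\ra_s - s)$ is of order $\la M\ra_1+1$, not $|\la M\ra_1-1|$; the oscillating predictable factor $e^{iuM_{s-}}$ prevents you from integrating up to the endpoint, and integration by parts only re-introduces terms of the same type. The standard cure is the Dvoretzky-type stopping/augmentation device --- stop at $\tau=\sup\{t:\la M\ra_t\le 1\}$ and append an independent Brownian motion so that the bracket equals $1$ exactly, paying a price controlled by $\|\la M\ra_1-1\|_p$ and the jump moments; this is exactly what Heyde--Brown and Haeusler do, and also what the present paper does in its proof of Theorem~\ref{t:clt}. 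Second, the exponent bookkeeping does not come out: with only the second-order bound $|e^{iy}-1-iy+y^2/2|\lesssim |y|^2\wedge|y|^3$ and a truncation at level $\delta$, optimizing $\delta$ and then $T$ in Esseen's inequality yields an exponent on $L_{2p}$ of order $1/(8p-5)$, which is strictly worse than $1/(2p+1)$ for every $p>1$; and the proposed Gronwall/Volterra closure multiplies the estimate by a factor $e^{Cu^2}$ (the kernel has size $u^2$ on $[0,1]$), which destroys the bound precisely in the range of $u$ needed for the smoothing inequality. Haeusler's actual proof avoids Gronwall altogether: it works with the product of conditional characteristic functions of a discrete-time (stopped and augmented) martingale, uses Taylor expansions of order tied to the $2p$-th conditional moments together with a backward induction, and only then passes to continuous time. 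Without these two ingredients your sketch would prove a bound of the form \eqref{e:ha88} only with strictly smaller exponents.
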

Our first result consists in showing that one can get sharper bounds if one replaces the Kolmogorov distance by the ($k$-)Kantorovich distance in \eqref{e:ha88}.
\begin{thm}
\label{t:clt}
For any $p > 1$, there exists $C_p$ (independent of $M$) such that 
\begin{equation}
\label{e:clt} 
\l_1(M_1,\Phi) \le C_p  L_{2p}^{1/(2p+1)} + 2 \|\la M \ra_1 - 1\|_1^{1/2},
\end{equation}
and for any $k \ge 0$,
\begin{equation}
\label{e:clt2} 
\l_{1,k}(M_1,\Phi) \le C_p  L_{2p}^{1/(2p+1)} + \frac{k}{2} L_{2p}^{1/p} + (k\vee 1) \|\la M \ra_1 - 1\|_1.
\end{equation}
\end{thm}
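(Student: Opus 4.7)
My approach is to first establish the $k$-Kantorovich bound \eqref{e:clt2} via Itô's formula applied to a space-time solution of the heat equation, and then deduce the Kantorovich bound \eqref{e:clt} from it by mollifying $1$-Lipschitz test functions into $\mcl{C}^2_b$ ones and optimizing.

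For \eqref{e:clt2}, I would fix $f \in \mcl{C}^2_b(\R,\R)$ with $\|f'\|_\infty \le 1$ and $\|f''\|_\infty \le k$, enlarge the probability space to carry a standard Gaussian $\xi$ independent of $M$, and introduce the heat semigroup $u(x,t) = E[f(x + \sqrt{t}\,\xi)]$. It satisfies $\dr_t u = \tfrac{1}{2}\dr_x^2 u$, with $u(\cdot,0) = f$ and $u(0,1) = \int f\,\d\Phi$, and the bounds $\|\dr_x u\|_\infty \le 1$, $\|\dr_x^2 u\|_\infty \le k$ persist in $t$. Applying Itô's formula for cadlag semimartingales to $s \mapsto u(M_s, 1 - \la M \ra_s)$ on $[0,1]$, the PDE cancels the drift coming from $\tfrac{1}{2}\dr_x^2 u$ against $d\la M^c\ra_s$, and taking expectation kills the stochastic integral. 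What remains is a boundary discrepancy measuring how far $\la M\ra_1$ is from $1$, together with a jump remainder $\sum_{s \le 1}[u(M_s,\cdot) - u(M_{s-},\cdot) - \dr_x u(M_{s-},\cdot)\,\Delta M(s)]$. The boundary discrepancy is bounded by $(k\vee 1)\|\la M\ra_1 - 1\|_1$ using $\|\dr_t u\|_\infty \le k/2$ and $\|\dr_x u\|_\infty \le 1$. For the jump remainder I would truncate at a threshold $\eta>0$: large jumps ($|\Delta M(s)|>\eta$) are bounded individually by $|\Delta M(s)|$ via $\|\dr_x u\|_\infty \le 1$, contributing at most $\eta^{1-2p}L_{2p}$ by Markov; small jumps are bounded by $\tfrac{k}{2}|\Delta M(s)|^2$ via Taylor, contributing at most $\tfrac{k}{2}\eta^{2-2p}L_{2p}$. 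Optimizing $\eta$ in the first estimate produces the $C_p L_{2p}^{1/(2p+1)}$ summand, while using the second-derivative bound on \emph{all} jumps (with a dominance $\sum|\Delta M(s)|^2 \le L_{2p}^{1/p}\cdot(\text{something})$ via Hölder) gives the $\tfrac{k}{2}L_{2p}^{1/p}$ summand.

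For \eqref{e:clt}, given $f$ bounded and $1$-Lipschitz, I would mollify by $f_\eta(x) = E[f(x + \eta\,\xi)]$, so that $\|f_\eta'\|_\infty \le 1$, $\|f_\eta''\|_\infty \le C/\eta$, and $\|f - f_\eta\|_\infty \le C\eta$. Applying \eqref{e:clt2} to $f_\eta$ with $k = C/\eta$ and using the triangle inequality yields a bound of the form
$$C\eta + C_p L_{2p}^{1/(2p+1)} + \frac{C}{\eta} L_{2p}^{1/p} + \frac{C}{\eta}\|\la M\ra_1 - 1\|_1.$$
Choosing $\eta = \|\la M\ra_1 - 1\|_1^{1/2}$ balances the first and last terms to produce the clean factor $2\|\la M\ra_1 - 1\|_1^{1/2}$, while the $L_{2p}^{1/p}/\eta$ contribution is dominated by $C_p L_{2p}^{1/(2p+1)}$ in the regime where the bound is nontrivial, so it is absorbed into the first term at the cost of enlarging $C_p$.

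The main obstacle I expect is the jump-truncation optimization: extracting the exponent $1/(2p+1)$ from the single available moment $L_{2p}$ (with $p>1$) requires carefully balancing a Markov-type tail estimate (using $\|\dr_x u\|_\infty \le 1$) against a quadratic Taylor remainder (using $\|\dr_x^2 u\|_\infty \le k$), and matching the constants to recover both the explicit factor $2$ in \eqref{e:clt} and the clean splitting into two jump-related terms in \eqref{e:clt2}. Additional care is needed for the cadlag bookkeeping in Itô's formula, in particular for jumps of $\la M\ra$ itself (arising from the predictable compensator of $\sum(\Delta M)^2$), which slightly complicate the cancellation of $\dr_x^2 u$ against $d\la M^c\ra_s$.
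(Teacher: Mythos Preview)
Your approach is genuinely different from the paper's, and as stated it has a real gap in the jump analysis.

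\textbf{The gap.} In your treatment of \eqref{e:clt2}, you claim that the small-jump contribution is ``at most $\tfrac{k}{2}\eta^{2-2p}L_{2p}$''. This inequality goes the wrong way: for $|\Delta M(s)|\le\eta$ and $p>1$ one has $|\Delta M(s)|^2 \ge \eta^{2-2p}|\Delta M(s)|^{2p}$, not the reverse. More fundamentally, $E\big[\sum_s|\Delta M(s)|^2\big]$ equals $E[[M]_1]-E[[M^c]_1]$ and is of order $1$ whenever the martingale is mostly driven by jumps, regardless of how small $L_{2p}$ is. So the second-order Taylor remainder on the jump sum is not small, and no truncation of the kind you describe balances against $\eta^{1-2p}L_{2p}$ to produce the exponent $1/(2p+1)$. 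The same issue obstructs your claim that ``using the second-derivative bound on all jumps'' yields $\tfrac{k}{2}L_{2p}^{1/p}$: the inequality $\sum|\Delta M|^{2}\le L_{2p}^{1/p}$ is false in general. To make an It\^o/Lindeberg approach work one would need a \emph{third}-order remainder (so that the $\tfrac{1}{2}\partial_x^2 u\,|\Delta M|^2$ piece is cancelled by the compensator), but then $\|\partial_x^3 u(\cdot,t)\|_\infty$ blows up like $t^{-1/2}$ near $t=0$, and extracting $L_{2p}^{1/(2p+1)}$ from this requires an argument of the Heyde--Brown/Haeusler type, which is precisely what you are trying to avoid.

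\textbf{What the paper does instead.} The paper takes the exponent $1/(2p+1)$ as a black box from the non-uniform Haeusler--Joos bound (Theorem~\ref{t:hj}): integrating that pointwise estimate in $x$ gives $\l_1(M_1,\Phi)\le \td C_p L_{2p}^{1/(2p+1)}$ immediately in the special case $\langle M\rangle_1=1$ a.s. The general case is reduced to this one by a stopping-time/embedding construction: stop $M$ at $\tau=\sup\{t\le 1:\langle M\rangle_t\le 1\}$, then append an independent Brownian motion to bring the quadratic variation up to exactly $1$. The terms $2\|\langle M\rangle_1-1\|_1^{1/2}$ in \eqref{e:clt} and $\tfrac{k}{2}L_{2p}^{1/p}+(k\vee 1)\|\langle M\rangle_1-1\|_1$ in \eqref{e:clt2} all arise from comparing $M_1$ with this modified martingale (the $L_{2p}^{1/p}$ piece coming specifically from the single jump of $\langle M\rangle$ at $\tau$, via $E[\Delta\langle M\rangle(\tau)]\le E[(\Delta M(\tau))^{2}]\le L_{2p}^{1/p}$), not from summing over all jumps. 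In particular the order of derivation is the reverse of yours: \eqref{e:clt} is proved first, and \eqref{e:clt2} is obtained by refining the same construction with a second-order Taylor expansion of $f$, rather than by mollifying.
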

\begin{rem}
Naturally, one has $\|\la M \ra_1 - 1\|_1 \le \|\la M \ra_1 - 1\|_p$, and the statements are only interesting when this quantity, and also $L_{2p}$, are small, so Theorem~\ref{t:clt} indeed provides better rates of convergence than Theorem~\ref{t:ha88}. It is shown in \cite{martingale_CLT} that it is not possible to change the exponent $p/(2p+1)$ appearing on the term $\|\la M \ra_1 - 1\|_p$ in the r.h.s.\ of \eqref{e:ha88} by any higher exponent. It would be interesting to investigate how sharp \eqref{e:clt} is in this respect. The term $\|\la M \ra_1 - 1\|_1$ appearing on the r.h.s.\ of \eqref{e:clt2} cannot be improved. Indeed, let $(B_s)_{s \ge 0}$ be a standard Brownian motion, and consider the martingale $M_s = B_{(1+\eps) s}$. Since the martingale is continuous, $L_{2p}$ vanishes, while one has $\|\la M \ra_1 - 1\|_1 = \eps$. On the other hand, the cosine function has first and second derivatives bounded by $1$, and thus
$$
\l_{1,1}(M_1,\Phi) \ge E[\cos(B_1)] - E[\cos(M_1)] = e^{-1/2} - e^{-(1+\eps)/2} \sim \frac{\eps}{2} \qquad (\eps \to 0),
$$
thus justifying optimality of the exponent on $\|\la M \ra_1 - 1\|_1$.
\end{rem}
\begin{rem}
A quantitative martingale CLT expressed in terms of the Kantorovich distance was already formulated in \cite[Theorem~8.1.16]{raru}. Terms involved in the bound are however difficult to estimate in practical situations, in contrast to what is obtained in Theorem~\ref{t:clt}.
\end{rem}

In order to prove Theorem~\ref{t:clt}, we will rely on the following non-uniform version of Theorem~\ref{t:ha88}.
\begin{thm}[\cite{hj,ha88}]
\label{t:hj}
For any $p > 1$, there exists $\td{C}_p$ (independent of $M$) such that if $L_{2p} + \|\la M \ra_1 - 1\|_p^{p} \le 1$, then for any $x \in \R$,
$$
\Ll| P[M_1 \le x] - \Phi(x) \Rr| \le \frac{\td{C}_p}{1+|x|^{2p}} \Ll( L_{2p}^{1/(2p+1)} + \|\la M \ra_1 - 1\|_p^{p/(2p+1)} \Rr).
$$
\end{thm}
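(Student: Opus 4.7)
The approach is to combine the uniform bound of Theorem~\ref{t:ha88} with moment estimates, then refine the resulting error on the tails of $M_1$. Set $\eps = L_{2p}^{1/(2p+1)}+\|\la M\ra_1-1\|_p^{p/(2p+1)}$, so that Theorem~\ref{t:ha88} directly gives $\sup_x |P[M_1\le x]-\Phi(x)|\le \ov{C}_p\,\eps$. Under the hypothesis $L_{2p}+\|\la M\ra_1-1\|_p^p\le 1$ we have $\|\la M\ra_1\|_p\le 2$, so the Burkholder–Davis–Gundy inequality yields
\begin{equation*}
E\bigl[|M_1|^{2p}\bigr]\le C_p\bigl(E[\la M\ra_1^p]+L_{2p}\bigr)\le C_p',
\end{equation*}
a uniform moment bound that will drive the tail estimates.

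I would then split on the size of $|x|$. For $|x|\le 1$, the target bound $\td{C}_p\,\eps/(1+|x|^{2p})\ge \td{C}_p\,\eps/2$ reduces to Theorem~\ref{t:ha88} upon taking $\td{C}_p\ge 2\ov{C}_p$. For $|x|>1$, one must establish the strictly stronger estimate $|P[M_1\le x]-\Phi(x)|\le \td{C}_p\,\eps/|x|^{2p}$. The naive tail argument gives $P[|M_1|>|x|]\le E[|M_1|^{2p}]/|x|^{2p}\le C_p'/|x|^{2p}$ by Markov, and $1-\Phi(|x|)\le e^{-x^2/2}\le C_p''/|x|^{2p}$, so that $|P[M_1\le x]-\Phi(x)|\le C_p'''/|x|^{2p}$. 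This has the correct power of $|x|$ but lacks the crucial factor $\eps$; a simple interpolation with the uniform bound produces only $\sqrt{\eps}/|x|^p$, which is weaker than the claim.

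To inject the small factor $\eps$ into the tail, I would revisit the proof of Theorem~\ref{t:ha88}, which proceeds by Skorokhod-embedding $M$ into a Brownian motion stopped at a time $\tau$ with $E[\tau]=E[\la M\ra_1]$, followed by an Esseen-type Fourier smoothing. The non-uniform refinement amounts to replacing the sup-norm on the test functions used in the smoothing step by the weighted norm $\sup_y (1+|y|^{2p})|f(y)|$: the weight propagates through the Fourier/convolution manipulations, and the moment bound above on $M_1$, together with a Rosenthal bound applied to $\la M\ra_1-1$, absorbs the extra $|x|^{2p}$ factor at each occurrence. A conceptually simpler—but more delicate to execute—variant is to stop $M$ at $\sigma=\inf\{t:|M_t|>|x|/2\}$, apply Theorem~\ref{t:ha88} to the stopped martingale, and control the truncation error by Doob's maximal inequality $P[\sigma\le 1]\le C/|x|^{2p}$.

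The main obstacle is precisely this last step. The non-uniform factor $(1+|x|^{2p})^{-1}$ cannot be obtained by black-boxing Theorem~\ref{t:ha88} and combining it with separate tail bounds, because each ingredient only controls one of the two requirements: the CLT rate provides $\eps$ but no decay in $|x|$, while moment/Markov bounds provide decay in $|x|$ but no $\eps$. Getting both simultaneously requires the weighted smoothing argument described above, which is the technical heart of the Haeusler–Joos theorem.
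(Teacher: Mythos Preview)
The paper does not prove Theorem~\ref{t:hj}: it is quoted from the literature. Immediately after the statement, the paper explains that the discrete-time version is \cite[Theorem~1]{hj}, and that the continuous-time statement follows from it by the discretization/approximation procedure described in \cite[Section~4]{ha88}. There is no argument in the paper beyond this citation.

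Your proposal is therefore not really comparable to ``the paper's proof''; more importantly, it is not a proof at all, as you yourself acknowledge in the last paragraph. The initial reductions (moment bound via Burkholder--Davis--Gundy, the case $|x|\le 1$) are fine, but the heart of the matter --- obtaining simultaneously the small factor $\eps$ and the decay $|x|^{-2p}$ --- is only described in outline (``weighted smoothing'', ``stop $M$ when it exits $[-|x|/2,|x|/2]$'') and not carried out. You correctly diagnose that neither Theorem~\ref{t:ha88} alone nor a Markov tail bound alone suffices, and that any interpolation of the two is too weak; but that diagnosis is where the proposal stops. Also, the parenthetical claim that the proof of Theorem~\ref{t:ha88} in \cite{ha88} proceeds via Skorokhod embedding is not accurate; Haeusler's argument is based on characteristic functions and a discrete approximation, not on embedding into Brownian motion.

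In short: the paper simply invokes \cite{hj,ha88}. If you want to supply a self-contained proof, you would have to reproduce the Haeusler--Joos weighted-smoothing argument in full, which your proposal gestures at but does not execute.
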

\cite[Theorem~1]{hj} is the equivalent statement concerning discrete time martingales. Theorem~\ref{t:hj} can be derived from its discrete time version by applying the approximation procedure explained in \cite[Section~4]{ha88} (in \cite{ha88}, \emph{locally} square-integrable martingales are considered, while we stick here with plainly square-integrable martingales. There is no loss of generality however, since a locally square-integrable martingale is in fact a square-integrable one if $\|\la M \ra_1 - 1\|_1$ is to be finite. One can thus skip the localization procedure at the end of \cite[Section~4]{ha88}). 

\begin{proof}[Proof of Theorem~\ref{t:clt}] We start by proving that there exists $C_p$ (independent of $M$) such that \eqref{e:clt} holds. We decompose the proof of this into three steps.

\medskip

\noindent \emph{Step I.1.} 
We first prove the claim assuming that 
\begin{equation}
\label{quadvar1}
\la M \ra_1 = 1 \text{ a.s.}
\end{equation}
and that $L_{2p} \le 1$.
Under this condition, Theorem~\ref{t:hj} ensures that
$$
\Ll| P[M_1 \le x] - \Phi(x) \Rr| \le \frac{\td{C}_p}{1+|x|^{2p}} L_{2p}^{1/(2p+1)}.
$$
We thus have, after possibly enlarging $\td{C}_p$,
$$
\l_1(M_1,\Phi) = \int \Ll| P[M_1 \le x] - \Phi(x) \Rr| \d x \le \td{C}_p L_{2p}^{1/(2p+1)},
$$
which is the desired result.

\medskip

\noindent \emph{Step I.2.} 
We now no longer impose that condition \eqref{quadvar1} holds, but keep with the assumption that $L_{2p} \le 1$. Following an idea probably due to \cite{dvo}, we introduce
$$
\tau = \sup \{t \le 1 : \la M \ra_t \le 1\}.
$$
Note that $\tau$ is a stopping time, since
$$
\{ \tau \le t \} = \bigcap_{\eps > 0} \{ \la M \ra_{t+\eps} > 1 \} \in \mcl{F}_{t}.
$$
We define
$$
\la M \ra_{\tau^-} = 
\left|
\begin{array}{ll}
\la M \ra_1 & \text{if } \la M \ra_1 \le 1, \\
\lim_{t \to \tau^-} \la M \ra_t  & \text{otherwise},
\end{array}
\right.
$$
and
$$
M_{\tau^-} = 
\left|
\begin{array}{ll}
M_1 & \text{if } \la M \ra_1 \le 1, \\
\lim_{t \to \tau^-} M_t  & \text{otherwise}.
\end{array}
\right.
$$
Note that $\la M \ra_{\tau^-} \le 1$. Let $(B_s)_{s \ge 0}$ be a standard Brownian motion, independent of the martingale. We define
$$
\td{M}_s = 
\left|
\begin{array}{ll}
M_s & \text{if } 0 \le s < \tau ,\\
M_{\tau^-} & \text{if } \tau \le s \le 1 ,\\
M_{\tau^-} + B_{s-1} & \text{if } 1 \le s \le 2-\la M \ra_{\tau^-}  ,\\
M_{\tau^-} + B_{1-\la M \ra_{\tau^-}} & \text{if }2-\la M \ra_{\tau^-} \le s \le 2 .\\
\end{array}
\right.
$$
By construction, $\td{M}$ is a martingale, and
$$
\la \td{M} \ra_2 - \la \td{M} \ra_1 = 1-\la {M} \ra_{\tau^-},
$$
hence $\la \td{M} \ra_2 = 1$. Naturally, the fact that $\td{M}$ is defined on $[0,2]$ instead of $[0,1]$ plays no role, and this martingale satisfies condition \eqref{quadvar1} (at time $2$). Let us write
$$
\td{L}_{2p} = E\Ll[ \sum_{0 \le t \le 2} |\Delta \td{M}(t)|^{2p} \Rr] \le L_{2p} \le 1.
$$
We learn from the the first step of the proof that
\begin{equation}
\label{oneside}
\l_1(\td{M}_2,\Phi) \le \td{C}_p \td{L}_{2p}^{1/(2p+1)} \le \td{C}_p {L}_{2p}^{1/(2p+1)}.
\end{equation}
We now want to use the fact that 
\begin{equation}
\label{triangle}
\l_1(M_1,\Phi) \le \l_1(M_1,\td{M}_2) + \l_1(\td{M}_2,\Phi)
\end{equation}
to estimate $\l_1(M_1,\Phi)$. In view of \eqref{coupling}, we have
$$
\l_1(M_1,\td{M}_2) \le E\Ll[ \Ll|M_1 - \td{M}_2\Rr| \Rr].
$$
Note that
$$
M_1 - \td{M}_2 = M_1 - M_\tau + \Delta M(\tau) \1_{\la M \ra_1 > 1} - B_{1-\la M \ra_{\tau^-}},
$$
and thus
$$
E\Ll[ \Ll|M_1 - \td{M}_2\Rr| \Rr] \le E\Ll[ \Ll|M_1 - M_\tau\Rr| \Rr] + E\Ll[ \Ll|\Delta M(\tau)\Rr| \Rr] + E\Ll[ \Ll|B_{1-\la M \ra_{\tau^-}}\Rr| \Rr].
$$
Let us write $a_1 + a_2 + a_3$ for the latter sum, with obvious identifications.
We bound the contribution of each of these terms successively.
$$
a_1  \le E\Ll[ (M_1 - M_\tau)^2 \Rr]^{1/2} = E\Ll[ \la M \ra_1 - \la M\ra_\tau \Rr]^{1/2},
$$
since $\tau \le 1$ is a stopping time. Now, either $\tau = 1$, in which case $\la M \ra_1 - \la M\ra_\tau  = 0$, or $\tau < 1$, in which case $\la M \ra_\tau \ge 1$. In both cases, we have 
$$
\la M \ra_1 - \la M\ra_\tau \le \Ll| \la M \ra_1 - 1 \Rr|,
$$
and thus
$$
a_1 \le \|\la M \ra_1 - 1 \|_1^{1/2}.
$$
As for $a_2$, we have
\begin{equation}
\label{deltam}
a_2 = E\Ll[ \Ll|\Delta M(\tau)\Rr| \Rr] \le E\Ll[ \Ll|\Delta M(\tau)\Rr|^{2p} \Rr]^{1/(2p)} \le L_{2p}^{1/(2p)}.
\end{equation}
For the third term, we have
$$
a_3 = c \ E\Ll[\Ll| 1-\la M\ra_{\tau^-}\Rr|^{1/2}\Rr] \le c \ E\Ll[\Ll| 1-\la M\ra_{\tau^-}\Rr|\Rr]^{1/2},
$$
where $c = E[|B_1|] \le 1$. We decompose the last expectation as
$$
E\Ll[\Ll| 1-\la M\ra_{\tau^-}\Rr| \ \1_{\la M \ra_1 \le 1}\Rr] + E\Ll[\Ll| 1-\la M\ra_{\tau^-}\Rr| \ \1_{\la M \ra_1 > 1}\Rr].
$$
The first term is bounded by $\|1-\la M \ra_1\|_1$, while the second term is smaller than
$$
E\Ll[ \Delta \la M \ra(\tau) \Rr] \le E\Ll[(\Delta M(\tau))^2\Rr]
$$
(to see this, consult for instance the proof of \cite[Theorem~4.2]{js}). The latter is bounded by 
$$
E\Ll[(\Delta M(\tau))^{2p}\Rr]^{1/p} \le L_{2p}^{1/p}.
$$
To sum up, we have shown that
\begin{eqnarray}
\label{m1m2}
\l_1(M_1,\td{M}_2) & \le & \|\la M \ra_1 - 1 \|_1^{1/2} + L_{2p}^{1/(2p)} + \sqrt{\|1-\la M \ra_1\|_1 + L_{2p}^{1/p}} \notag \\
& \le & 2 \|\la M \ra_1 - 1 \|_1^{1/2} + 2 L_{2p}^{1/(2p)}.
\end{eqnarray}
Since we assume that $L_{2p} \le 1$, we have $L_{2p}^{1/(2p)} \le L_{2p}^{1/(2p+1)}$, and equations \eqref{triangle}, \eqref{oneside} and \eqref{m1m2} give us that 
\begin{equation*}
\l_1(M_1, \Phi)  \le  (\td{C}_p+2)  L_{2p}^{1/(2p+1)} + 2 \|\la M \ra_1 - 1 \|_1^{1/2}
,
\end{equation*}
which is what we wanted to prove.
\medskip

\noindent \emph{Step I.3.} There remains to consider the case when $L_{2p} > 1$. It follows from \eqref{coupling} that
$$
\l_1(M_1,\Phi) \le c + \|M_1\|_1,
$$
where $c$ is the $L_1$ norm of a standard Gaussian, $c \le 1$. Moreover,
$$
\|M_1\|_1 \le \|M_1\|_2 = \|\la M \ra_1\|_1^{1/2} \le \Ll(1+\|\la M \ra_1-1\|_1\Rr)^{1/2}.
$$
As a consequence, it is always true that
$$
\l_1(M_1,\Phi) \le 2+\|\la M \ra_1 - 1\|_1^{1/2}.
$$
The theorem is thus clearly true when $L_{2p} > 1$ as soon as $C_p \ge 2$, and this finishes the proof of \eqref{e:clt2}.

\medskip

We now proceed to show that there exists $C_{p}$ (independent of $M$ and $k$) such that \eqref{e:clt2} holds, and decompose the proof of this fact into two steps.

\medskip

\noindent \emph{Step II.1.} We assume first that $L_{2p} \le 1$, and consider again the martingale $\td{M}$ as constructed in step I.2. Since $\langle \td{M} \rangle_2 = 1$, we know from step I.1 that
\begin{equation}
\label{stepii1}
\l_1(\td{M}_2,\Phi) \le \td{C}_p L_{2p}^{1/(2p+1)}.
\end{equation}
Let 
$$
\ov{M}_2 = M_\tau + B_{1-\langle M \rangle_{\tau^-}},
$$
and observe that
$$
\td{M}_2 = \ov{M}_2 - \Delta M(\tau) \1_{\langle M \rangle_1 > 1}.
$$
We have
$$
\l_1(\ov{M}_2,\Phi) \le \l_1(\ov{M}_2,\td{M}_2) + \l_1(\td{M}_2,\Phi).
$$
The first term on the r.h.s.\ is smaller than $E[|\Delta M (\tau)|]$, and we have seen in \eqref{deltam} that this is smaller than $L_{2p}^{1/(2p)} \le L_{2p}^{1/(2p+1)}$. Using also \eqref{stepii1}, we obtain
\begin{equation}
\label{ovmphi}
\l_{1,k}(\ov{M}_2,\Phi) \le \l_1(\ov{M}_2,\Phi) \le (\td{C}_p+1) L_{2p}^{1/(2p+1)}.
\end{equation}
Let $f \in \mcl{C}_b^2 (\R,\R)$ be such that $\|f'\|_\infty \le 1$ and $\|f''\|_\infty \le k$. We will show that
\begin{equation}
\label{mtauovm2}
\Ll| E[f(\ov{M}_2)] - E[f(M_\tau)]   \Rr| \le \frac{k}{2} \Ll(L_{2p}^{1/p} + \|\langle M \rangle_1 - 1 \|_1 \Rr).
\end{equation}
Indeed, since $f \in \mcl{C}_b^2 (\R,\R)$ and $\|f''\|_\infty \le k$, we have 
$$
\Ll|E\Ll[ f(\ov{M}_2) - f(M_\tau) - (\ov{M}_2 - M_\tau) f'(M_\tau) \Rr] \Rr| \le \frac{k}{2} E\Ll[(\ov{M}_2 - M_\tau)^2\Rr].
$$
But
\begin{eqnarray*}
E\Ll[(\ov{M}_2 - M_\tau) f'(M_\tau) \Rr] & = & E[B_{1-\langle M \rangle_{\tau^-}}f'(M_\tau)] \\
& = & E\big[E[B_{1-\langle M \rangle_{\tau^-}} \ | \ \mcl{F}_{\tau}] \ f'(M_\tau)\big],
\end{eqnarray*}
and $E[B_{1-\langle M \rangle_{\tau^-}} \ | \ \mcl{F}_{\tau}] = 0$ since $B$ and $M$ are independent. On the other hand,
$$
E\Ll[(\ov{M}_2 - M_\tau)^2\Rr] = E[(B_{1-\langle M \rangle_{\tau^-}})^2] = E[1-\langle M \rangle_{\tau^-}],
$$
and we have seen in step I.2, while treating the term $a_3$, that
$$
E[1-\langle M \rangle_{\tau^-}] \le \|\langle M \rangle_1 - 1 \|_1 + L_{2p}^{1/p}.
$$
As a consequence, \eqref{mtauovm2} is proved, and thus
\begin{equation}
\label{mtauovm2bis}
\l_{1,k}(M_\tau,\ov{M}_2) \le \frac{k}{2} \Ll(L_{2p}^{1/p} + \|\langle M \rangle_1 - 1 \|_1 \Rr).
\end{equation}
We now show that 
\begin{equation}
\label{mtaum1}
\l_{1,k}(M_\tau,M_1) \le \frac{k}{2} \|\langle M \rangle_1 - 1 \|_1 	
\end{equation}
using the same technique. We write
$$
\Ll|E\Ll[ f(M_1) - f(M_\tau) - (M_1 - M_\tau) f'(M_\tau) \Rr] \Rr| \le \frac{k}{2} E\Ll[(M_1 - M_\tau)^2\Rr],
$$
and observe that
\begin{equation*}
E\Ll[({M}_1 - M_\tau) f'(M_\tau) \Rr]  =  E\big[E[({M}_1 - M_\tau) \ | \ \mcl{F}_{\tau}] \ f'(M_\tau)\big] = 0,
\end{equation*}
since $M$ is a martingale and $\tau$ a stopping time. On the other hand, we have seen while treating the term $a_1$ in step I.2\ that
$$
E\Ll[(M_1 - M_\tau)^2\Rr] \le \|\langle M \rangle_1 - 1 \|_1,
$$
and thus \eqref{mtaum1} is proved. Combining \eqref{ovmphi}, \eqref{mtauovm2bis} and \eqref{mtaum1}, we thus obtain
$$
\l_{1,k}(M_1,\Phi) \le \Ll(\td{C}_p + 1\Rr)L_{2p}^{1/(2p+1)} + \frac{k}{2} L_{2p}^{1/p} + k \|\langle M \rangle_1 - 1 \|_1,
$$
and this proves \eqref{e:clt2} for $L_{2p} \le 1$.

\medskip

\noindent \emph{Step II.2.} We now conclude by considering the case when $L_{2p} > 1$. We learn from step I.3\ that
$$
\l_{1,k}(M_1,\Phi) \le 2+\|\la M \ra_1 - 1\|_1^{1/2}.
$$
Since for any $x \ge 0$, we have $\sqrt{x} \le 1 + x/2$, we thus obtain
$$
\l_{1,k}(M_1,\Phi) \le 3 + \frac{1}{2} \|\la M \ra_1 - 1\|_1,
$$
and thus relation \eqref{e:clt2} holds when $L_{2p} > 1$, provided we choose $C_{p} \ge 3$.
\end{proof}
%
%
%
%
%
%
%
%
\section{The random walk in random conductances}
\label{s:rw}
\setcounter{equation}{0}
Let $0 < \alpha \le \beta < + \infty$, and $\Omega = [\alpha,\beta]^\mathbb{B}$. For any family $\omega = (\omega_e)_{e \in \mathbb{B}} \in \Omega$, we consider the Markov process $(X_t)_{t \ge 0}$ whose jump rate between $x$ and a neighbour $y$ is given by $\omega_{x,y}$. We write $\PPo_x$ for the law of this process starting from $x \in \Z^d$, $\EEo_x$ for its associated expectation. Its infinitesimal generator is $L^\omega$ defined in \eqref{defLom}. We assume that the $(\omega_e)_{e \in \mathbb{B}}$ are themselves i.i.d.\ random variables under the measure $\P$ (with associated expectation~$\E$). We write $\ov{\P} = \P\PPo_0$ for the \emph{annealed} measure. It was shown in \cite{kipvar} that under $\ov{\P}$ and as $\eps$ tends to $0$, the process $\sqrt{\eps} X_{\eps^{-1} t}$ converges to a Brownian motion, whose covariance matrix we write $\ov{A}$ (in \cite{sid}, it is shown that under our present assumption of uniform ellipticity, the invariance principle holds under $\PPo_0$ for almost every $\omega$).

Let $\xi \in \R^d$ be a vector of unit $L^2$ norm. The purpose of this section is to give sharp estimates on the $k$-Kantorovich distance between $\xi \cdot X_t/\sqrt{t}$ and $\Phi_{\sigma(\xi)}$, where we write $\Phi_\sigma$ to denote the cumulative distribution function of a Gaussian random variable with variance $\sigma^2$, and $\sigma(\xi) = \xi \cdot \ov{A} \xi$.

\begin{thm}
\label{t:rw}
For any $\delta > 0$, there exists a constant $C$ (which may depend on the dimension) such that for any $k \ge 0$ and any $\xi$ of unit norm, one has
\begin{equation}
\label{e:rw}
\l_{1,k}\left(\frac{\xi \cdot X_t}{\sqrt{t}}, \Phi_{\sigma(\xi)}\right) \le C \ (k \vee 1) \ \Psi_{q,\delta}(t^{-1})
\end{equation}
for some $q \ge 0$, where in the l.h.s.,\ $\xi \cdot X_t/\sqrt{t}$ stands for the distribution of this random variable under the measure $\ov{\P}$, and where $\Psi_{q,\delta}$ was defined in \eqref{defpsi}.
\end{thm}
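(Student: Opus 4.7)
The plan is to follow the Kipnis--Varadhan decomposition used in \cite{berry}. There exists a corrector $\chi_\xi(\omega,\cdot) : \Z^d \to \R$ such that $x \mapsto \xi \cdot x + \chi_\xi(\omega,x)$ is $L^\omega$-harmonic with stationary gradients in $\omega$, and so under the annealed measure $\ov{\PP} = \P\PPo_0$,
$$
M^{(\xi)}_t := \xi \cdot X_t + \chi_\xi(\omega, X_t)
$$
is a square integrable martingale with $\ov{\EE}[\la M^{(\xi)} \ra_t]/t \to \sigma(\xi)^2$. The rescaling $\td{M}_s := M^{(\xi)}_{ts}/(\sigma(\xi)\sqrt{t})$ for $s \in [0,1]$ is then a square integrable martingale to which Theorem~\ref{t:clt} can be applied. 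The conclusion is transferred back to $\xi \cdot X_t/(\sigma(\xi)\sqrt{t})$ at the cost of an error of order $\ov{\EE}|\chi_\xi(\omega,X_t)|/\sqrt{t}$: since any test function in the definition of $\l_{1,k}$ is $1$-Lipschitz, this error directly bounds the discrepancy in $k$-Kantorovich distance.

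\textbf{Two martingale estimates.} Theorem~\ref{t:clt} demands a control of $\td L_{2p}$ and of $\|\la \td M \ra_1 - 1\|_1$. For $\td L_{2p}$ the situation is favourable: the jumps of $\xi \cdot X$ have modulus at most $1$, the edge gradients of $\chi_\xi$ are bounded in $L^{2p}(\P)$ by the estimates of \cite[Theorem~2.1]{glotto} (pointwise for $d \ge 3$, up to logarithmic corrections for $d = 2$), and the number of jumps of $X$ on $[0,t]$ is dominated by a Poisson variable of mean $O(t)$. Hence $\td L_{2p} \le C t^{-(p-1)}$ and $\td L_{2p}^{1/(2p+1)} \le C t^{-1/2 + \delta}$ for $p$ large. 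For the bracket, I would invoke the $L^2$ estimate on $\la \td M \ra_1 - 1$ already established in \cite{berry}, which rests on the $L^p$ bounds and spatial decorrelation of the corrector proved in \cite[Theorem~2.1 and Proposition~2.1]{glotto}; Cauchy--Schwarz then yields $\|\la \td M \ra_1 - 1\|_1 \le C \Psi_{q,\delta}(t^{-1})$. Plugged into Theorem~\ref{t:clt}, these give $\l_{1,k}(\td M_1, \Phi) \le C(k \vee 1) \Psi_{q,\delta}(t^{-1})$.

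\textbf{Remainder and main obstacle.} The moment bounds of \cite{glotto} yield $\ov{\EE}|\chi_\xi(\omega, X_t)| = O(1)$ in $d \ge 3$, $O(\log^{q}(t))$ in $d = 2$, and $O(t^{1/4})$ in $d = 1$ (the last being essentially sharp, since the one-dimensional corrector is of order $\sqrt{|x|}$). After division by $\sqrt{t}$ each of these is absorbed by $\Psi_{q,\delta}(t^{-1})$, and combining with the previous paragraph delivers \eqref{e:rw}. The main obstacle is the $L^1$ (in fact $L^2$) control of $\la \td M \ra_1 - 1$: this is the quantitative ergodicity statement for the environment viewed from the walk, and is precisely the point where the quantitative homogenization estimates of \cite{glotto} are indispensable. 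The novel contribution here is that Theorem~\ref{t:clt} converts this input, already available in \cite{berry}, into a nearly diffusive Kantorovich rate $t^{-1/2+\delta}$ for $d \ge 3$, bypassing the $t^{-1/5}$ Kolmogorov ceiling of \cite{ha88}.
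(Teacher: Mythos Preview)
Your approach is essentially the paper's, and your identification of the three inputs---the jump bound giving $L_{2p}^{1/(2p+1)} = O(t^{-1/2+\delta})$, the $L^2$ bracket estimate from \cite{berry}, and the corrector remainder bound---is correct. The one technical discrepancy worth noting is that the paper does not work with the true Kipnis--Varadhan corrector $\chi_\xi$ but with the \emph{regularized} martingale $M_\mu$ of \cite[(3.7)]{berry}, specialized to $\mu = 1/t$; this introduces a third error term $\l_{1,k}(\Phi_{\sigma_{1/t}},\Phi_{\sigma(\xi)}) \le |\sigma_{1/t}-\sigma(\xi)|$, handled by \cite[Theorem~1]{glotto}. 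The point is not cosmetic: the estimates you cite from \cite{berry} (the bracket variance \cite[(3.10)]{berry}, the remainder \cite[Proposition~3.4]{berry}, and the jump bound \cite[(3.11)]{berry}) are stated for $M_\mu$, not for your $M^{(\xi)}$, and in $d \le 2$ the true corrector is not a stationary $L^2$ object, so the regularization is what makes these citations legitimate. For $d \ge 3$ your formulation works as written; for $d \le 2$ you would either need to rederive the bracket and remainder bounds for the true corrector (feasible, since the bracket involves only the stationary gradients) or, as the paper does, simply pass to $M_{1/t}$ and absorb the extra Gaussian comparison term.
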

\begin{rem}
When $d \ge 3$, the exponent of decay in \eqref{e:rw} can thus be made arbitrarily close to $1/2$, and this is the exponent one gets when considering sums of i.i.d.\ random variables with finite third moment.
\end{rem}
\begin{rem}
By the same reasoning, one can also prove that there exist constants $C$ (which may depend on the dimension) and $q$ such that, for any $\xi$ of unit norm, one has
$$
\l_1\left(\frac{\xi \cdot X_t}{\sqrt{t}}, \Phi_{\sigma(\xi)}\right) \le C
\left|
\begin{array}{ll}
 t^{-1/8} & \text{if } d = 1, \\
 \log_+^q(t) \ t^{-1/8}  & \text{if } d = 2, \\
 \log_+^{1/4}(t) \ t^{-1/4}  & \text{if } d = 3, \\
 t^{-1/4}   & \text{if } d \ge 4,
\end{array}
\right.
$$
where again $\xi \cdot X_t/\sqrt{t}$ stands for the distribution of this random variable under the measure $\ov{\P}$.
\end{rem}
\begin{proof}[Proof of Theorem~\ref{t:rw}]
The argument follows \cite{berry} closely. We first treat the case $d \ge 2$. For $\mu > 0$, we decompose $\xi \cdot X_t$ as $M_\mu(t) + R_\mu(t)$, where $M_\mu(t)$ and $R_\mu(t)$ are defined in \cite[(3.7)-(3.8)]{berry} respectively. Under the measure $\ov{\P}$ (and for the natural filtration associated to $X$), $M_\mu$ is a martingale with stationary increments, and we write $\sigma_\mu = \ov{\E}[M_\mu(1)^2]$. We have
\begin{multline*}
\l_{1,k}\left(\frac{\xi \cdot X_t}{\sqrt{t}}, \Phi_{\sigma(\xi)}\right) \\
\le \l_{1,k}\left(\frac{\xi \cdot X_t}{\sqrt{t}}, \frac{M_\mu(t)}{\sqrt{t}}\right) + \l_{1,k}\left(\frac{M_\mu(t)}{\sqrt{t}}, \Phi_{\sigma_\mu}\right) + \l_{1,k}\left(\Phi_{\sigma_\mu}, \Phi_{\sigma(\xi)}\right),
\end{multline*}
with the understanding that random variables stand in place of their respective distributions under the measure $\ov{\P}$. Let us write the three terms in the r.h.s.\ above as $b_1 + b_2 + b_3$, and proceed to evaluate each of these terms for the specific choice $\mu = 1/t$. Considering \eqref{coupling}, we can bound the term $b_1$ by
$$
\l_{1}\left(\frac{\xi \cdot X_t}{\sqrt{t}}, \frac{M_{1/t}(t)}{\sqrt{t}}\right) \le \ov{\E}\Ll[\frac{|R_{1/t}(t)|}{\sqrt{t}}\Rr] \le \frac{\ov{\E}[(R_{1/t}(t))^2]^{1/2}}{\sqrt{t}},
$$
and \cite[Proposition~3.4]{berry} gives us that
$$
\ov{\E}[(R_{1/t}(t))^2] \le
C \left|
\begin{array}{ll}
\log_+^q(t) & \text{if } d = 2, \\
1 & \text{if } d \ge 3, 
\end{array} 
\right.
$$
for some constants $C$ and $q$.

To handle the term $b_3$, consider a standard Gaussian random variable $\mcl{N}$. Then $\sigma \mcl{N}$ has $\Phi_{\sigma}$ as its cumulative ditribution function, hence 
$$
\l_{1,k}\left(\Phi_{\sigma}, \Phi_{\sigma'}\right) \le \l_1\left(\Phi_{\sigma}, \Phi_{\sigma'}\right) \le E[|\sigma \mcl{N} - \sigma' \mcl{N}|] = E[|\mcl{N}|] \ |\sigma - \sigma'|.
$$
Since $E[|\mcl{N}|] \le 1$, the term $b_3$ is bounded by $|\sigma_{1/t} - \sigma(\xi)|$. It is shown in \cite[Theorem~1]{glotto} (and recalled in \cite[Proposition~3.3]{berry}) that
$$
|\sigma_{1/t} - \sigma(\xi)| \le C 
\left|
\begin{array}{ll}
\log_+^q(t) \ t^{-1} & \text{if } d = 2, \\
t^{-3/2} & \text{if } d = 3, \\
\log_+(t) \ t^{-2} & \text{if } d = 4, \\
t^{-2}& \text{if } d \ge 5,
\end{array}
\right.
$$
which is much better than what we need for our purpose. 

We finally turn to the term $b_2$. For any $p > 1$, we introduce 
$$
L_{2p}(t) = \frac{1}{t^{p}} \ \ov{\E}\Ll[ \sum_{0 \le s \le t} |\Delta M_{1/t}(t)|^{2p}  \Rr].
$$
Theorem~\ref{t:clt} tells us that if $L_{2p}(t) \le 1$, then $b_2$ is smaller than
$$
\Ll(C_p + \frac{k}{2}\Rr) (L_{2p}(t))^{1/(2p+1)} + (k \vee 1) \Ll\|\frac{\la M_{1/t}\ra_t}{t} - \sigma_{1/t}\Rr\|_1^{1/2}.
$$
Replacing the exponent $4$ by $2p$ leaves the proof of \cite[(3.11)]{berry} unchanged, and ensures that
$$
L_{2p}(t)  \le C 
\left|
\begin{array}{ll}
	\log_+^q(t) \ t^{-p+1} & \text{if } d = 2, \\
	t^{-p+1} & \text{if } d \ge 3,
\end{array}
\right.
$$
for some constants $C$ and $q$ depending on $p$. In particular, it is always true that $L_{2p}(t)$ tends to $0$ as $t$ tends to infinity. We fix $p$ large enough so that
\begin{equation}
\label{defp}
\frac{p-1}{2p+1} > \frac{1}{2} - \delta.
\end{equation}
With such a choice for $p$, we have $(L_{2p}(t))^{1/(2p+1)} = o(t^{\delta -1/2})$. 

On the other hand, \cite[(3.10)]{berry} ensures that
$$
\Ll\|\frac{\la M_{1/t}\ra_t}{t} - \sigma_{1/t}\Rr\|_2^2 \le C
\left|
\begin{array}{ll}
	\log_+^q(t) \ t^{-1/2} & \text{if } d = 2, \\
	\log_+(t) \ t^{-1} & \text{if } d = 3, \\
t^{-1} & \text{if } d \ge 4.
\end{array}
\right.
$$
Since 
$$
\Ll\|\frac{\la M_{1/t}\ra_t}{t} - \sigma_{1/t}\Rr\|_1 \le \Ll\|\frac{\la M_{1/t}\ra_t}{t} - \sigma_{1/t}\Rr\|_2,
$$
this finishes the proof of Theorem~\ref{t:rw} for $d\ge 2$ and $t$ large enough, and it is easy to see that the l.h.s.\ of \eqref{e:rw} is bounded for smaller $t$. The one-dimensional case is obtained in a similar way, following \cite[Section~9]{berry}.
\end{proof}
%
%
%
%
%
%
%
%
\section{Homogenization}
\label{s:homog}
\setcounter{equation}{0}

We consider the discrete parabolic equation with random coefficients
\begin{equation*}
\label{parabeq}
\tag{DPE$^\omega$}
\left\{
\begin{array}{ll}
\displaystyle{\frac{\dr u}{\dr t}  = L^\omega u    } & \text{on } \R_+ \times \Z^d, \\
\\
\displaystyle{   u(0,\cdot) = f   } & \text{on } \Z^d,
\end{array}
\right.	
\end{equation*}
where $f: \Z^d \to \R$, $L^\omega$ is the operator defined in \eqref{defLom}, and by $L^\omega u(t,x)$, we understand $L^\omega u(t,\cdot) (x)$. Note that $L^\omega$ is the discrete analog of a divergence form operator. 

For a fixed $\omega \in \Omega$, we say that $u$ is a \emph{solution of} \eqref{parabeq} if it is continuous on $[0,+\infty) \times \Z^d$, has continuous time derivative there (in other words, $u(\cdot,x)$ is in $\mcl{C}^1(\R_+,\R)$ for every $x \in \Z^d$), and satisfies the identities dislayed in \eqref{parabeq}.

\begin{prop}
\label{discexistunique}
For any $\omega \in \Omega$ and any bounded initial condition $f$, there exists a unique bounded solution $u$ of \eqref{parabeq}, and it is given by 
\begin{equation}
\label{urw}
u(t,x) = \EEo_x[f(X_t)].
\end{equation}
\end{prop}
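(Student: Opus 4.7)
\medskip

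My plan is to prove existence by exhibiting \eqref{urw} directly and uniqueness through a martingale argument using the stopping of the random walk, relying on the boundedness of the jump rates to avoid any technical difficulties with explosion.

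For \textbf{existence}, I would start by noting that since $\omega_e \in [\alpha,\beta]$, the total jump rate out of any vertex $x$ is $\lambda_x = \sum_{y\sim x} \omega_{x,y} \in [2d\alpha,2d\beta]$. Consequently, $L^\omega$ is a bounded linear operator on the Banach space $\ell^\infty(\Z^d)$ with operator norm at most $4d\beta$. Defining $u(t,x) = \EEo_x[f(X_t)]$, I would first check boundedness ($\|u(t,\cdot)\|_\infty \le \|f\|_\infty$), and then verify the PDE by a short-time expansion. Conditioning on the first jump, for small $h>0$,
\begin{equation*}
u(t+h,x) = e^{-\lambda_x h} u(t,x) + \sum_{y\sim x} \omega_{x,y} \int_0^h e^{-\lambda_x s}\, u(t+h-s, y)\, \d s,
\end{equation*}
together with the Markov property. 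Differentiating this identity at $h=0^+$ (which is justified because $u$ is uniformly bounded and the total jump rate is bounded) yields $\dr_t u(t,x) = -\lambda_x u(t,x) + \sum_{y\sim x} \omega_{x,y} u(t,y) = L^\omega u(t,x)$, and the initial condition is immediate. Continuity of $u(\cdot,x)$ on $[0,+\infty)$ follows from dominated convergence and the right-continuity of the paths of $X$.

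For \textbf{uniqueness}, suppose $v$ is any bounded solution and fix $T>0$, $x\in\Z^d$. I would introduce the process $N_s := v(T-s, X_s)$ for $s\in[0,T]$ and argue that it is a martingale under $\PPo_x$. Dynkin's formula for continuous-time Markov chains with bounded generator gives
\begin{equation*}
N_s - N_0 = \int_0^s \bigl( -\dr_t v + L^\omega v\bigr)(T-r, X_r)\, \d r + \text{(martingale)} = \text{martingale},
\end{equation*}
since the integrand vanishes by assumption. Because $v$ is bounded, $N$ is a bounded martingale (not merely local), so taking expectations gives $v(T,x) = N_0 = \EEo_x[N_T] = \EEo_x[v(0,X_T)] = \EEo_x[f(X_T)]$, which coincides with \eqref{urw}.

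The \textbf{main obstacle} I anticipate is purely technical: justifying the interchange of differentiation and expectation when establishing that \eqref{urw} is genuinely $\mathcal{C}^1$ in $t$ with derivative $L^\omega u$, and rigorously invoking Dynkin's formula in the uniqueness part. Both are greatly simplified here by the fact that the jump rates are uniformly bounded above by $2d\beta$, which prevents explosion and makes $L^\omega$ a bounded operator on $\ell^\infty(\Z^d)$; so these are standard facts from continuous-time Markov chain theory rather than deep obstructions.
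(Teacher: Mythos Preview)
Your proposal is correct and follows essentially the same approach as the paper: existence is checked directly from the probabilistic representation (the paper simply calls this ``a direct consequence of the definition of the Markov chain''), and uniqueness is obtained by showing that $s \mapsto v(T-s,X_s)$ is a bounded martingale and equating its expectations at $s=0$ and $s=T$. Your write-up is more detailed, but the strategy and the key martingale argument are identical.
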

This is a very well known result. Checking that \eqref{urw} is indeed a solution is a direct consequence of the definition of the Markov chain. To see uniqueness, take $\td{u}$ a bounded solution of \eqref{parabeq}. Letting $\td{M}_s = \td{u}(t-s,X_s)$, one can show that $(\td{M}_s)_{0 \le s \le t}$ is a martingale under $\PPo_x$ for any $x \in \Z^d$, and as a consequence,
$$
\td{u}(t,x) = \EEo_x[\td{M}_0] = \EEo_x[\td{M}_t] = \EEo_x[\td{u}(0,X_t)] = \EEo_x[f(X_t)],
$$
which is the function defined in \eqref{urw}.

For a symmetric positive-definite matrix $\ov{A}$, we consider the equation \eqref{cparabeq} given in the introduction. We say that $\ov{u}$ is a \emph{solution of} \eqref{cparabeq} if it is continuous on $\R_+ \times \R^d$, has a continuous first derivative in the time variable and continuous first and second derivatives in the space variable on $(0,+\infty) \times \R^d$, and satisfies the identities dislayed in \eqref{cparabeq}.

\begin{prop}
\label{contexistunique}
For any bounded continuous initial condition $f$, there exists a unique bounded solution $\ov{u}$ of \eqref{cparabeq}, and it is given by 
\begin{equation}
\label{curw}
\ov{u}(t,x) = \EE_x[f(B_t)],
\end{equation}
where, under the measure $\PP_x$, $B_t$ is a Brownian motion with covariance matrix $\ov{A}$ that starts at $x$.
\end{prop}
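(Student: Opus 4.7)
The plan is to mirror the scheme for Proposition~\ref{discexistunique}, substituting It\^o's formula for the discrete Markov chain computation. The Brownian motion $B$ under $\PP_x$ has at time $t$ the Gaussian density $\ov{p}_t(x,\cdot)$ written out in the statement of Theorem~\ref{t:main2}, so the proposed solution can equivalently be written as $\ov{u}(t,x) = \int \ov{p}_t(x,y) f(y)\,\d y$.

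For existence, I would verify directly that $\ov{u}(t,x) = \EE_x[f(B_t)]$ meets all the requirements. Boundedness $\|\ov{u}\|_\infty \le \|f\|_\infty$ is immediate. For $t > 0$ the kernel $\ov{p}_t$ is $\mcl{C}^\infty$ in $(t,x)$ and has Gaussian decay in $y$, which justifies differentiating under the integral sign in both the time and the space variable. A direct calculation gives $\dr_t \ov{p}_t(x,y) = \frac{1}{2} \nabla_x \cdot (\ov{A} \nabla_x \ov{p}_t(x,y))$, and this identity passes to $\ov{u}$, so \eqref{cparabeq} holds on $(0,+\infty) \times \R^d$. Continuity up to $t = 0$ with $\ov{u}(0,x) = f(x)$ follows from the a.s.\ convergence $B_t \to x$ under $\PP_x$ as $t \to 0$, combined with continuity and boundedness of $f$ and dominated convergence.

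For uniqueness, let $\td{u}$ be any bounded solution of \eqref{cparabeq}, fix $t > 0$, $x \in \R^d$, and consider $\td{M}_s = \td{u}(t-s, B_s)$. Since $\td{u}$ is assumed to be $\mcl{C}^1$ in time and $\mcl{C}^2$ in space on $(0,+\infty) \times \R^d$, It\^o's formula can be applied on any interval $[0, t-\eps]$ with $0 < \eps < t$; the finite variation part equals $(-\dr_t \td{u} + \frac{1}{2}\nabla \cdot (\ov{A} \nabla \td{u}))(t-s,B_s)\,\d s$, which vanishes by \eqref{cparabeq}. Hence $(\td{M}_s)_{0 \le s \le t-\eps}$ is a local martingale, and since $\td{u}$ is bounded, it is a genuine martingale. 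Evaluating at $s=0$ and $s=t-\eps$ yields
$$
\td{u}(t,x) = \EE_x[\td{M}_0] = \EE_x[\td{M}_{t-\eps}] = \EE_x\bigl[\td{u}(\eps, B_{t-\eps})\bigr].
$$
Letting $\eps \to 0$ and using the continuity of $\td{u}$ on $\R_+ \times \R^d$ together with dominated convergence (valid by boundedness of $\td{u}$) identifies $\td{u}(t,x)$ with $\EE_x[f(B_t)] = \ov{u}(t,x)$.

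The only subtle point is that It\^o's formula cannot be applied directly on $[0,t]$ since smoothness of $\td{u}$ is only assumed on the open set $(0,+\infty) \times \R^d$; this is handled by the $\eps$-regularization above and a passage to the limit using the continuity hypothesis at $t=0$. Beyond that, this is a textbook computation and presents no deeper difficulty than its discrete counterpart in Proposition~\ref{discexistunique}.
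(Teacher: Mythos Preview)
Your proposal is correct and follows exactly the approach the paper indicates: the paper's own proof consists of a single sentence stating that the result is standard and is proved in the same way as Proposition~\ref{discexistunique}, with the help of It\^o's formula. You have filled in precisely those details, including the $\eps$-regularization to handle the lack of smoothness at $t=0$, which is a careful touch the paper does not spell out.
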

 Again, this result is standard. It is proved in the same way as Proposition~\ref{discexistunique}, with the help of Itô's formula. 

\begin{rem}
The boundedness assumption in Propositions~\ref{discexistunique} and \ref{contexistunique} could be changed for being subexponential. More precisely, let $f : \Z^d \to \R$ be such that for any $\alpha > 0$, $|f(x)| = O(e^{\alpha |x|})$. Then there exists a unique solution $u$ of \eqref{parabeq} such that, for any $\alpha > 0$ and any $t \ge 0$, $\sup_{s \le t} |u(s,x)| = O(e^{\alpha |x|})$. The boundedness condition was chosen merely for convenience.
\end{rem}

We now define rescaled solutions of the parabolic equation with random coefficients. For a bounded continuous function $f:\R^d \to \R$, we let $u^{(\eps)}$ be the bounded solution of \eqref{parabeq} with initial condition the function $x \mapsto f(\eps  x)$, and for any $t\ge 0$ and $x \in \R^d$, we let
\begin{equation}
\label{uepsprob}
u_\eps(t,x) = u^{(\eps)}(\eps^{-2} t, \lfloor \eps^{-1}x \rfloor) = \EEo_{\lfloor \eps^{-1} x \rfloor} [f(\eps X_{\eps^{-2} t})].
\end{equation}
It is well understood (see for instance \cite[Chapter~3]{blp}) that the probabilistic approach yields pointwise convergence of $u_\eps$ to the solution of the homogenized problem. The following result is folklore (see also \cite{lejay} where the homogenization of random operators in continuous space is obtained using the probabilistic approach).
\begin{thm}
\label{t:homog}
There exists a symmetric positive-definite matrix $\ov{A}$ (independent of $f$) such that for every $t \ge 0$ and $x \in \R^d$, we have
\begin{equation}
\label{pointwise}
u_\eps(t,x) \xrightarrow[\eps \to 0]{\text{(prob.)}} \ov{u}(t,x),
\end{equation}
where $\ov{u}$ is the bounded solution of \eqref{cparabeq} with initial condition $f$.
\end{thm}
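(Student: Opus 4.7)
The plan is to deduce Theorem~\ref{t:homog} from the quenched invariance principle for the random walk $(X_t)_{t \ge 0}$. Thanks to \eqref{uepsprob} and Proposition~\ref{contexistunique}, the claim reduces to showing that
$$
\EEo_{\lfloor \eps^{-1}x\rfloor}\Ll[ f\Ll(\eps X_{\eps^{-2} t}\Rr) \Rr] \xrightarrow[\eps \to 0]{\text{(prob.)}} \EE[f(x + B_t)],
$$
where $B$ is a centered Brownian motion with covariance matrix $\ov A$. I would take $\ov A$ to be the matrix produced by the annealed invariance principle \cite{kipvar} recalled at the start of Section~\ref{s:rw}; it is deterministic, independent of $f$, symmetric, and positive-definite by the uniform ellipticity of the conductances.

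The first step is to decouple the starting point from $\eps$, using the stationarity of $\P$. Writing $\tau_y$ for the shift on $\Omega$ defined by $(\tau_y \omega)_{a,b} = \omega_{a+y,b+y}$, the $\Z^d$-translation invariance of the dynamics yields
$$
u_\eps(t,x) = G_\eps\Ll(\tau_{\lfloor \eps^{-1}x\rfloor}\omega\Rr), \qquad G_\eps(\omega) := \mathbf{E}_0^\omega\Ll[ f\Ll(\eps \lfloor \eps^{-1}x\rfloor + \eps X_{\eps^{-2} t}\Rr) \Rr].
$$
Because $\P$ is $\tau_y$-invariant, $u_\eps(t,x)$ has the same law as $G_\eps(\omega)$. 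Since convergence in distribution to a constant is equivalent to convergence in probability, it suffices to prove $G_\eps(\omega) \to \ov u(t,x)$ in $\P$-probability.

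For the core step I would invoke the quenched invariance principle of \cite{sid}: under uniform ellipticity, for $\P$-a.e.\ $\omega$, the rescaled process $(\sqrt \eps X_{\eps^{-1} s})_{s \ge 0}$ converges weakly under $\PPo_0$ to a Brownian motion with covariance $\ov A$. Applied to the bounded continuous test function $z \mapsto f(x + z)$, this gives
$$
\EEo_0\Ll[ f(x + \eps X_{\eps^{-2} t}) \Rr] \xrightarrow[\eps \to 0]{} \EE[f(x+B_t)] = \ov u(t,x) \qquad \text{for $\P$-a.e.\ }\omega.
$$
The discrepancy between $x$ and $\eps \lfloor \eps^{-1}x \rfloor$ is at most $\eps\sqrt d$; using the boundedness of $f$, the uniform continuity of $f$ on compact sets, and the tightness of $\eps X_{\eps^{-2} t}$ granted by the invariance principle, one checks that $G_\eps(\omega)$ differs from the quantity above by an amount going to $0$ almost surely. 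Dominated convergence then lifts the $\P$-a.s.\ limit to an $L^1(\P)$ limit, and in particular to convergence in $\P$-probability.

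The only genuine obstacle is reconciling the almost sure nature of the quenched invariance principle with a starting point $\lfloor \eps^{-1} x\rfloor$ that depends on $\eps$, so that the environment seen from the walk is not a single fixed sample of $\P$ but an $\eps$-varying shift of one. The stationarity reduction above sidesteps this: once we are allowed to replace $\tau_{\lfloor \eps^{-1}x\rfloor}\omega$ by $\omega$ in distribution, the quenched invariance principle applies verbatim, and what remains is standard bookkeeping with weakly converging measures tested against bounded continuous functions.
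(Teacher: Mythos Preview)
Your argument is correct and follows the same route as the paper: use the shift invariance of $\P$ to replace the walk started at $\lfloor \eps^{-1}x\rfloor$ by one started at the origin (in distribution), then invoke an invariance principle and identify the limit with $\ov u(t,x)$. The only difference is the citation---you appeal to the quenched result of \cite{sid} to get $\P$-a.s.\ convergence of $G_\eps$ (which already implies convergence in probability, so the dominated-convergence step is unnecessary), whereas the paper cites the in-probability invariance principle of \cite{kipvar,masi} directly.
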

\begin{proof}
Let us write $(\theta_x)$ to denote the translations on $\Omega$, acting according to $(\theta_x \ \omega)_{y,z} = \omega_{x+y,x+z}$. The distribution of $X$ under $\PPo_x$ is the same as the one of $X+x$ under $\PP^{\theta_x  \omega}_0$ (both are Markov processes with the same initial condition and the same transition rates). Using this observation in \eqref{uepsprob}, we obtain that
$$
u_\eps(t,x) = \EE^{\theta_{\lfloor \eps^{-1} x \rfloor}  \omega}_0 [f(\eps X_{\eps^{-2} t} + x_\eps)],
$$
where $x_\eps = \eps \lfloor \eps^{-1} x \rfloor$.

Since the measure $\P$ is invariant under translations, $u_\eps(t,x)$ has the same distribution as
\begin{equation}
\label{testf}
\EEo_0 [f(\eps X_{\eps^{-2} t} + x_\eps)].
\end{equation}
It is proved in \cite{kipvar,masi} that for some symmetric positive-definite $\ov{A}$ (independent of $f$), the quantity in \eqref{testf} converges in probability to $\EE_0[f(B_t +x)]$ as $\eps$ tends to $0$, where $B$ is a Brownian motion with covariance matrix $\ov{A}$.
\end{proof}
\begin{rem}
It would be interesting to replace the convergence in probability in \eqref{pointwise} by an almost sure convergence. Note that almost sure convergence for $x = 0$ is equivalent to an almost sure central limit theorem for the random walk, and this is proved in \cite{sid}. Theorem~\ref{t:homog} contrasts with for instance \cite[Theorem~7.4]{jko}, where weak convergence of an analogue of $u_\eps$ is proved, but for almost every environment.
\end{rem}
We start the proof of Theorem~\ref{t:main} with two lemmas with a Fourier-analytic flavour.
\begin{lem}
\label{l:fourier}
Let $Z$ be a random variable following the distribution $\nu$, $\mcl{N}$ be a standard $d$-dimensional Gaussian random variable independent of $Z$, and $\sigma > 0$. We have
$$
E[f(Z + \sigma N)] = (2\pi)^{-d} \int_{\R^d} \exp\Ll(-\frac{\sigma^2 |\xi|^2}{2}\Rr) \hat{f}(\xi) \hat{\nu}(\xi) \ \d \xi,
$$
where
$$
\hat{f}(\xi) = \int e^{i \xi \cdot x} f(x) \ \d x,
$$
and 
$$
\hat{\nu}(\xi) = \int e^{- i \xi \cdot x}  \ \d \nu(x).
$$
\end{lem}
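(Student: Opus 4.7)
The plan is elementary: invoke Fourier inversion on $f$, substitute the random point $Z+\sigma\mcl{N}$, and then use independence of $Z$ and $\mcl{N}$ to split the expectation into a product of characteristic functions. With the sign convention $\hat{f}(\xi)=\int e^{i\xi\cdot x}f(x)\,\d x$ fixed in the statement, the matching inversion formula reads $f(x)=(2\pi)^{-d}\int e^{-i\xi\cdot x}\hat{f}(\xi)\,\d\xi$, which is legitimate as soon as $\hat{f}\in L^1(\R^d)$ (one may as well take $f$ in Schwartz class and extend afterwards).

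Substituting $x=Z+\sigma\mcl{N}$ and taking expectations, after an application of Fubini to swap $E$ and the $\d\xi$-integral, I would write
$$
E[f(Z+\sigma\mcl{N})] \;=\; (2\pi)^{-d}\int_{\R^d}\hat{f}(\xi)\,E\!\Ll[e^{-i\xi\cdot Z}\,e^{-i\sigma\xi\cdot\mcl{N}}\Rr]\d\xi.
$$
Independence of $Z$ and $\mcl{N}$ factorises the inner expectation into $E[e^{-i\xi\cdot Z}]\cdot E[e^{-i\sigma\xi\cdot\mcl{N}}]$. The first factor is exactly $\hat{\nu}(\xi)$ by the definition adopted in the statement. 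The second factor is the characteristic function of the centred Gaussian $\sigma\xi\cdot\mcl{N}$, which has variance $\sigma^2|\xi|^2$, and hence equals $\exp(-\sigma^2|\xi|^2/2)$. Reassembling yields the announced identity.

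The only point requiring attention is the Fubini step, which needs an integrability hypothesis on $\hat{f}$. This can either be imposed directly, or bypassed by proving the identity first for $f$ in the Schwartz class and extending by density to the class of functions actually used in Theorem~\ref{t:main}; the weak-derivative hypotheses there will produce ample decay of $\hat{f}$. There is no conceptual obstacle beyond this bookkeeping.
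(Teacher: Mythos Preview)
Your argument is correct. It differs from the paper's route, however. The paper does not apply Fourier inversion to $f$; instead it computes the density of $Z+\sigma\mcl{N}$ by convolving $\nu$ with the Gaussian kernel $g_\sigma$, expresses $g_\sigma$ via the Fourier transform of $g_{1/\sigma}$, and only then integrates against $f(z)$ and swaps the $\d z$ and $\d\xi$ integrals. In effect, the paper puts the Fourier inversion on the Gaussian (where it is unconditionally valid) and requires only that $f$ be bounded and in $L^1$ for the final Fubini; your version puts the inversion on $f$ and thus needs $\hat f\in L^1(\R^d)$. Either hypothesis is harmless in the application (the proof of Theorem~\ref{t:main} uses $\int|\hat f(\xi)|(|\xi|^2+1)\,\d\xi<\infty$, which already forces $\hat f\in L^1$), so both approaches land in the same place with comparable bookkeeping. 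Your route is arguably the more transparent one-liner; the paper's has the minor advantage that no inversion hypothesis on $f$ is ever invoked.
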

\begin{proof}
Let us write 
$$
g_\sigma(x) = \frac{1}{(2\pi \sigma^2)^{d/2}} \exp\Ll( - \frac{|x|^2}{2 \sigma^2} \Rr).
$$
Note first that
\begin{equation}
\label{Fourierg}
\hat{g}_{1/\sigma}(x) = \exp\Ll( - \frac{|x|^2}{2 \sigma^2} \Rr) = (2\pi \sigma^2)^{d/2} g_\sigma(x).
\end{equation}
The distribution of $Z + \sigma \mcl{N}$ has a density (w.r.\ to Lebesgue measure) at point $z$ which is given by
\begin{eqnarray*}
\int g_\sigma(z-x) \ \d \nu(x) & \stackrel{\text{\eqref{Fourierg}}}{=} & 
{(2\pi \sigma^2)^{-d/2}} \int \hat{g}_{1/\sigma}(z-x) \ \d \nu(x) \\
& = & {(2\pi \sigma^2)^{-d/2}} \int e^{i \xi \cdot (z-x)} g_{1/\sigma}(\xi) \ \d \xi \ \d \nu(x) \\
& = & {(2\pi \sigma^2)^{-d/2}} \int e^{i \xi \cdot z} g_{1/\sigma}(\xi) \hat{\nu}(\xi) \ \d \xi.
\end{eqnarray*}
As a consequence (and using the fact that $f$ and $\hat{\nu}$ are bounded), we have
\begin{eqnarray*}
E[f(Z + \sigma N)] & = & {(2\pi \sigma^2)^{-d/2}} \int f(z) e^{i \xi \cdot z} g_{1/\sigma}(\xi) \hat{\nu}(\xi) \ \d \xi \ \d z\\
& = & {(2\pi \sigma^2)^{-d/2}} \int g_{1/\sigma}(\xi) \hat{f}(\xi) \hat{\nu}(\xi) \ \d \xi.
\end{eqnarray*}
Since 
$$
{(2\pi \sigma^2)^{-d/2}}  g_{1/\sigma}(\xi) = (2\pi)^{-d} \exp\Ll( - \frac{\sigma^2 |x|^2}{2}  \Rr),
$$
this proves the lemma.
\end{proof}
\begin{lem}
\label{l:fourier2}	
For any integer $m$, there exists a constant $C$ such that if if the weak derivatives of order $m$ of $f$ are in $L^2(\R^d)$, then
$$
(2\pi)^{-d} \int \Ll(1+|\xi|^{2m}\Rr) \ \Ll|\hat{f}(\xi)\Rr|^2 \ \d \xi \le \|f\|_2^2 + \sum_{j = 1}^d \|\dr_{x_j^m} f\|_2^2.
$$
\end{lem}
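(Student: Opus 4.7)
The proof is a direct application of the Plancherel identity, so there is no real obstacle; the only work is to organize the bookkeeping and justify the manipulation of weak derivatives on the Fourier side.

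First I would recall Plancherel's identity in the form $(2\pi)^{-d} \int |\hat{g}(\xi)|^2 \, \d\xi = \|g\|_2^2$ for $g \in L^2(\R^d)$. Applied to $g = f$, this takes care of the first term: $(2\pi)^{-d} \int |\hat{f}(\xi)|^2 \, \d\xi = \|f\|_2^2$. Next, for each $j \in \{1,\ldots,d\}$, I would use the standard fact that if the weak derivative $\dr_{x_j^m} f$ belongs to $L^2(\R^d)$, then its Fourier transform is given by $\widehat{\dr_{x_j^m} f}(\xi) = (i \xi_j)^m \hat{f}(\xi)$ (this is proved by integration by parts against a Schwartz test function and density). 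Plancherel applied to $g = \dr_{x_j^m} f$ then yields
\begin{equation*}
(2\pi)^{-d} \int |\xi_j|^{2m} \, |\hat{f}(\xi)|^2 \, \d\xi = \|\dr_{x_j^m} f\|_2^2.
\end{equation*}

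The remaining step is an elementary pointwise inequality relating $|\xi|^{2m}$ to the coordinate quantities $|\xi_j|^{2m}$. Since $|\xi|^{2m} = \Ll(\sum_{j=1}^d \xi_j^2\Rr)^m$, the power-mean inequality (or convexity of $x \mapsto x^m$ on $\R_+$) gives
\begin{equation*}
|\xi|^{2m} \le d^{m-1} \sum_{j=1}^d |\xi_j|^{2m}.
\end{equation*}
Integrating this against $|\hat{f}(\xi)|^2$ and combining with the Plancherel identities above produces the bound
\begin{equation*}
(2\pi)^{-d} \int |\xi|^{2m} \, |\hat{f}(\xi)|^2 \, \d\xi \le d^{m-1} \sum_{j=1}^d \|\dr_{x_j^m} f\|_2^2.
\end{equation*}
Adding the two contributions $(1+|\xi|^{2m})$ and absorbing the factor $d^{m-1}$ into a constant $C = C(m,d)$ then yields the stated inequality. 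No step is subtle; the only point worth double-checking is the Fourier-side representation of the weak derivatives, which is completely standard for $f, \dr_{x_j^m} f \in L^2(\R^d)$.
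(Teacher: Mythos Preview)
Your proof is correct and follows essentially the same approach as the paper: Plancherel/Parseval for $f$ and for each $\dr_{x_j^m} f$, followed by a pointwise comparison of $|\xi|^{2m}$ with $\sum_j |\xi_j|^{2m}$. In fact you are slightly more careful than the paper's own argument, which passes directly from $\int |\xi_j|^{2m}|\hat f|^2 = (2\pi)^d\|\dr_{x_j^m}f\|_2^2$ to $\int |\xi|^{2m}|\hat f|^2 \le (2\pi)^d\sum_j\|\dr_{x_j^m}f\|_2^2$ without recording the factor $d^{m-1}$; your use of the power-mean inequality makes explicit the constant $C = C(m,d)$ announced in the statement.
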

\begin{proof}
One has
$$
(-i \xi_j)^m \hat{f} = \widehat{\dr_{x_j^m} f}
$$
in the sense of distributions. By Parseval's theorem (\cite[Theorem~IX.6]{rs-fourier}), since ${\dr_{x_j^m} f}$ is assumed to be in $L^2(\R^d)$, so is $\widehat{\dr_{x_j^m} f}$, and 
$\|\widehat{\dr_{x_j^m} f}\|_2 = (2\pi)^{d/2} \|\dr_{x_j^m} f\|_2$. Hence,
$$
\int |\xi_j|^{2m} |\hat{f}(\xi)|^2 \ \d \xi = (2\pi)^d \|\dr_{x_j^m} f\|_2^2,
$$
and as a consequence,
$$
\int |\xi|^{2m} |\hat{f}(\xi)|^2 \ \d \xi \le (2\pi)^d  \sum_{j = 1}^d \|\dr_{x_j^m} f\|_2^2.
$$
One also has $\|\hat{f}\|_2 = (2\pi)^{d/2} \|f\|_2$, so the lemma is proved.
\end{proof}
\begin{rem}
In fact, as the proof reveals, there is a converse to the lemma: if 
$$
\int (1+|\xi|^{2m}) \ |\hat{f}(\xi)|^2 \ \d \xi
$$
is finite, then all the weak derivatives of $f$ up to order $m$ are in $L^2(\R^d)$.
\end{rem}
\begin{proof}[Proof of Theorem~\ref{t:main}]
Let $t > 0$. We saw in the proof of Theorem~\ref{t:homog} that
\begin{eqnarray*}
\E[u_\eps(t,x)] & = & \E \EE^{\theta_{\lfloor \eps^{-1} x \rfloor}  \omega}_0 [f(\eps X_{\eps^{-2} t} + x_\eps)] \\
& = & \ov{\E}[f(\eps X_{\eps^{-2} t} + x_\eps)],
\end{eqnarray*}
where in the last line, we used the fact that the measure $\P$ is translation invariant, and we recall that we write $\ov{\E}$ for $\E\EEo_0$ and $x_\eps$ for $\eps \lfloor \eps^{-1} x \rfloor$. Note that
$$
\Ll|\ov{\E}[f(\eps X_{\eps^{-2} t} + x_\eps)] - \ov{\E}[f(\eps X_{\eps^{-2} t} + x)]\Rr| \le \sum_{j=1}^d \|\dr_{x_j} f \|_\infty \ \eps,
$$
which is the first term in the r.h.s.\ of \eqref{e:main} (a ``lattice effect''). We now focus on studying the difference
$$
\Ll| \ov{\E}[f(\eps X_{\eps^{-2} t} + x)]  - \EE_0[f(B_t + x)]\Rr|,
$$
where we recall that $\EE_0[f(B_t+x)] = \EE_x[f(B_t)] = \ov{u}(t,x)$. Possibly replacing $f$ by $f(\ \cdot\ + x)$, we may as well suppose that $x = 0$. Let $\sigma > 0$ be a small parameter, $\mcl{N}$ be a standard $d$-dimensional Gaussian random variable, independent of everything else, and write $f_t = f(\sqrt{t} \ \cdot)$. Since $f_t$ is bounded and continuous, we have
\begin{equation}
\label{limsigma}
\ov{\E}[f(\eps X_{\eps^{-2} t})] = \ov{\E}\Ll[f_t\Ll(\frac{\eps}{\sqrt{t}} X_{\eps^{-2} t}\Rr)\Rr] =  \lim_{\sigma \to 0} \ov{\E}\Ll[f_t\Ll(\frac{\eps}{\sqrt{t}} X_{\eps^{-2} t}+ \sigma \mcl{N}\Rr)\Rr].
\end{equation}
Similarly,
\begin{equation}
\label{limsigma2}
\EE_0[f(B_t)] = \EE_0[f(\sqrt{t} B_1)] = \EE_0[f_t(B_1)] =  \lim_{\sigma \to 0} \EE_0[f_t(B_1 + \sigma \mcl{N})],
\end{equation}
where we slightly abuse notation by using the same $\mcl{N}$ to denote a standard Gaussian (independent of everything else) under both the measures $\EE_0$ and $\ov{\E}$.
Let us write $\nu_{\eps}$ for the distribution of 
$$
\frac{\eps}{\sqrt{t}} X_{\eps^{-2} t}
$$
under the measure $\ov{\P}$, and $\nu_0$ for the distribution of $B_1$ under $\EE_0$. Note that
\begin{equation*}
\hat{\nu}_\eps(\xi) = \ov{\E}\Ll[ \exp\Ll( i |\xi| \ \frac{\eps \ \xi \cdot X_{\eps^{-2} t}}{ \sqrt{t} \ |\xi|} \Rr) \Rr].
\end{equation*}
The function $\R \to \R, x \mapsto e^{i |\xi| x}$ has first derivative bounded by $|\xi|$ and second derivative bounded by $|\xi|^2$.In view of \eqref{compfl1k}, we obtain from Theorem~\ref{t:rw} that
$$
\Ll|\hat{\nu}_\eps(\xi) - \hat{\nu}_0(\xi) \Rr| \le C |\xi| \ (|\xi| \vee 1) \ \Psi_{q,\delta}\Ll(\frac{\eps^2}{{t}}\Rr).
$$
Using Lemma~\ref{l:fourier}, we thus obtain that
\begin{equation*}
\begin{split}
& \Ll| \ov{\E}\Ll[f_t\Ll(\frac{\eps}{\sqrt{t}} X_{\eps^{-2} t}+ \sigma \mcl{N}\Rr)\Rr] - \EE_0[f_t(B_1 + \sigma \mcl{N})] \Rr| \\
& \qquad \le (2\pi)^{-d} \int_{\R^d} \exp\Ll(-\frac{\sigma^2 |\xi|^2}{2}\Rr) \hat{f}_t(\xi) \Ll|\hat{\nu}_\eps(\xi) - \hat{\nu}_0(\xi) \Rr| \ \d \xi \\
& \qquad \le C \ \Psi_{q,\delta}\Ll(\frac{\eps^2}{{t}}\Rr) \underbrace{\int \Ll|\hat{f}_t(\xi)\Rr| \ |\xi| \ (|\xi| \vee 1)   \ \d \xi},
\end{split}
\end{equation*}
where $C$ does not depend on $\sigma$.
Since $\hat{f}_t(\xi) = t^{-d/2} \hat{f}(\xi/\sqrt{t})$, we can perform a change of variables on the integral underbraced above, and bound it by 
$$
(t +\sqrt{t}) \int  \Ll|\hat{f}(\xi)\Rr| \  (|\xi|^2 + 1)   \ \d \xi.
$$
Let $m = \lfloor d/2 \rfloor +3$. By the Cauchy-Schwarz inequality, the integral above is bounded by
$$
\Ll(\int \frac{(|\xi|^2 + 1)^2}{{1+|\xi|^{2m}}} \ \d \xi \Rr)^{1/2}  \Ll(\int \Ll(1+|\xi|^{2m}\Rr) \ \Ll|\hat{f}(\xi)\Rr|^2 \ \d \xi \Rr)^{1/2}.
$$
Since $2m - 4 > d$, the first term of this product is finite, while Lemma~\ref{l:fourier2} gives us that the second term is bounded by
$$
(2\pi)^{d/2}\Ll(\|f\|_2^2 + \sum_{j = 1}^d \|\dr_{x_j^m} f\|_2^2\Rr)^{1/2} \le (2\pi)^{d/2} \Ll(\|f\|_2 + \sum_{j = 1}^d \|\dr_{x_j^m} f\|_2\Rr).
$$
We have thus proved that
\begin{multline*}
\Ll| \ov{\E}\Ll[f_t\Ll(\frac{\eps}{\sqrt{t}} X_{\eps^{-2} t}+ \sigma \mcl{N}\Rr)\Rr] - \EE_0[f_t(B_1 + \sigma \mcl{N})] \Rr| \\
\le C \  (t+\sqrt{t})  \Ll(\|f\|_2 +   \sum_{j=1}^d \|\dr_{x_j^m} f\|_2 \Rr) \ \Psi_{q,\delta}\Ll(\frac{\eps^2}{{t}}\Rr).
\end{multline*}
Taking the limit $\sigma \to 0$ and recalling \eqref{limsigma} and \eqref{limsigma2}, we obtain the announced result.
\end{proof}
%
%
%
%
%
%
%
%
\section{Heat kernel estimates}
\label{s:hk}
\setcounter{equation}{0}
The heat kernel $p_t^\o(x,y)$ is defined so that $(t,y) \mapsto p^\o_t(x,y)$ is the unique bounded solution to \eqref{parabeq} with initial condition $f = \1_x$. The heat kernel is symmetric: $p_t^\o(x,y) = p_t^\o(y,x)$, and by translation invariance of the random coefficients, $\E [p_t^\o(x,y)] = \E [p_t^\o(0,y-x)]$.

The aim of this section is to prove Theorem~\ref{t:main2}. In order to do so, we will need a regularity result on the averaged heat kernel. For $f : \Z^d \to \R$ and $1 \le i \le d$, we write
$$
\nabla_i f(x) = f(x+\mathbf{e}_i) - f(x),
$$
where $(\mathbf{e}_i)_{1 \le i \le d}$ is the canonical basis of $\R^d$. The following result was proved in \cite[Theorem~1.4]{connadny}, and then elegantly rederived in \cite[(1.4)]{deldeu}.
\begin{thm}[\cite{connadny,deldeu}]
\label{t:diffqt}
Let 
\begin{equation}
\label{defqt}
q_t(x) = \E\Ll[ p^\o_t(0,x) \Rr]. 
\end{equation}
There exist $C, c_1 > 0$ such that for any $t > 0$ and any $x \in \Z^d$, one has
$$
\Ll| \nabla_i q_t(x) \Rr| \le \frac{C}{t^{(d+1)/2}} \exp\Ll( -c_1\Ll( \frac{|x|^2}{t} \wedge |x| \Rr) \Rr).
$$
\end{thm}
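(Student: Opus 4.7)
The plan is to combine reversibility of the random walk (a consequence of the symmetry $\omega_{x,y} = \omega_{y,x}$) with the semigroup decomposition of $p_t^\omega$ and Davies-type exponential weights. The key point is that the discrete gradient $\nabla_i q_t(x)$ is much smaller than $|\nabla_i p_t^\omega(0,x)|$ would be for a typical realization $\omega$, the extra smoothness being a manifestation of the translation invariance of $\P$; the uniform ellipticity $\alpha \le \omega_e \le \beta$ (notation of Section~\ref{s:rw}) guarantees deterministic-in-$\omega$ Nash--Aronson Gaussian upper bounds on $p_t^\omega$ itself.

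Using reversibility $p_s^\omega(x,y) = p_s^\omega(y,x)$ and the semigroup identity at time $t/2$, I first write
\begin{equation*}
\nabla_i q_t(x) = \E\Ll[\sum_{y \in \Z^d} p_{t/2}^\omega(0,y) \bigl(p_{t/2}^\omega(y,x+\mathbf{e}_i) - p_{t/2}^\omega(y,x)\bigr)\Rr].
\end{equation*}
For a parameter $\lambda \in \R^d$ to be optimized, I would split $p_{t/2}^\omega(0,y) = \bigl(e^{\lambda \cdot y} p_{t/2}^\omega(0,y)\bigr) \cdot e^{-\lambda \cdot y}$ and apply Cauchy--Schwarz, first over $y$ and then under $\E$, to get
\begin{equation*}
|\nabla_i q_t(x)|^2 \le \E\Ll[\sum_y e^{2\lambda \cdot y}\, p_{t/2}^\omega(0,y)^2\Rr] \cdot \E\Ll[\sum_y e^{-2\lambda \cdot y}\bigl(p_{t/2}^\omega(y,x+\mathbf{e}_i) - p_{t/2}^\omega(y,x)\bigr)^2\Rr].
\end{equation*}
The first factor is controlled by a discrete version of Davies' perturbation method for the uniformly elliptic operator $L^\omega$, which gives a deterministic-in-$\omega$ estimate of the form $C t^{-d/2} \exp(C\beta |\lambda|^2 t)$. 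For the second factor, I would expand the square and apply the semigroup identity $\sum_y p_{t/2}^\omega(a,y) p_{t/2}^\omega(y,b) = p_t^\omega(a,b)$ (after shifting $y \leftarrow y + x$ to use translation invariance of $\P$), reducing the $\E$-average to $e^{-2\lambda \cdot x}\exp(C\beta|\lambda|^2 t) \cdot (q_t(0) - q_t(\mathbf{e}_i))$. The classical optimization $\lambda \sim x/t$ then produces the Gaussian factor $\exp(-c|x|^2/t)$ in the diffusive regime $|x| \le t$, while in the ballistic regime $|x| > t$ a direct Carn\'e--Varopoulos bound on $p_t^\omega$ itself supplies the decay $\exp(-c|x|)$.

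After this stage the bound reads, schematically, $|\nabla_i q_t(x)|^2 \lesssim t^{-d/2}\,(q_t(0) - q_t(\mathbf{e}_i))\,\exp(-c(|x|^2/t \wedge |x|))$, and the argument is essentially self-improving since $q_t(0) - q_t(\mathbf{e}_i)$ is itself a gradient quantity at the origin. To extract the sharp prefactor $t^{-(d+1)/2}$ I would close the loop by integrating the energy identity
\begin{equation*}
\frac{\d}{\d t}\sum_y p_t^\omega(0,y)^2 = -2\,\cE^\omega\bigl(p_t^\omega(0,\cdot)\bigr)
\end{equation*}
on $[t/2,t]$ and combining with the on-diagonal Nash bound $\E[p_t^\omega(0,0)] \le C t^{-d/2}$, which furnishes the missing factor $t^{-1/2}$. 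The main technical obstacle will be calibrating the Davies weights $\lambda$ against the ellipticity constants so that the spurious factor $\exp(C\beta|\lambda|^2 t)$ is absorbed into the target Gaussian envelope; this is precisely what forces the dichotomy between the regimes $|x| \le t$ and $|x| > t$ that appears in the final statement. I would follow the streamlined presentation of \cite{deldeu} rather than the earlier PDE-based derivation of \cite{connadny}.
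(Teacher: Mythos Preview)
The paper does not give its own proof of this theorem: it is quoted from the literature, with the sentence ``The following result was proved in \cite[Theorem~1.4]{connadny}, and then elegantly rederived in \cite[(1.4)]{deldeu}'' and nothing further. So there is no in-paper argument to compare your sketch against; the theorem is used as a black box in the proof of Theorem~\ref{t:main2}.

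Your outline does follow the Delmotte--Deuschel strategy that the paper points to: split the semigroup at $t/2$, separate the on-diagonal heat-kernel decay from an $\ell^2$ gradient energy via Cauchy--Schwarz with Davies weights, exploit translation invariance of $\P$ to collapse the energy term to the on-diagonal gradient $q_t(0)-q_t(\mathbf e_i)$, and then close with the Dirichlet-form/energy identity to pick up the extra $t^{-1/2}$. Two places in your write-up are looser than they should be. First, ``Cauchy--Schwarz, first over $y$ and then under $\E$'' does not literally produce the product $\E[\,\cdot\,]\cdot\E[\,\cdot\,]$ you display; one gets a product of $L^2(\P)$-norms (or one uses Jensen before the $y$-CS), and the bookkeeping of which factor carries the $\omega$-fluctuations matters for the self-improvement step. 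Second, once the Davies weight $e^{-2\lambda\cdot y}$ is present, the semigroup identity no longer collapses $\sum_y$ directly to $p_t^\omega(x,x)$; in \cite{deldeu} this is handled by working with the conjugated (Davies-perturbed) semigroup from the outset rather than inserting the weights after the fact. Neither of these is a fatal gap---the argument can be made to run as you describe---but both would need to be executed more carefully than the sketch suggests.
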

We also recall the following upper bound on the heat kernel, taken from \cite[Proposition~3.4]{del} (see also \cite[Section~3]{cks} for earlier results in this context).
\begin{thm}[\cite{del}]
\label{t:del}
(1) There exist constants $C, \ov{c}$ such that for any $t \ge 0$ and any $x \in \Z^d$,
$$
p_t^\omega(0,x) \le \frac{C}{1 \vee t^{d/2}} \exp\Ll(-D_{\ov{c}t}(x)\Rr),
$$
where 
$$
D_{t}(x) = |x|\arsinh\Ll( \frac{|x|}{t} \Rr) + t \Ll(\sqrt{1+\frac{|x|^2}{t^2}} - 1\Rr).
$$
(2) In particular, there exists $c_2 > 0$ such that for any $x\in \Z^d$,
$$
p_t^\omega(0,x) \le \frac{C}{1 \vee t^{d/2}} \exp\Ll(-c_2\Ll( \frac{|x|^2}{t} \wedge |x| \Rr) \Rr).
$$
\end{thm}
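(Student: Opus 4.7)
The plan is to establish Theorem~\ref{t:del} via the classical two-step scheme of a Nash-type on-diagonal bound followed by Davies' exponential perturbation, exploiting the uniform ellipticity $\alpha \le \omega_e \le \beta$ in an essential way.

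First, I would prove the on-diagonal estimate $p_t^\omega(x,x) \le C/(1 \vee t^{d/2})$. For $t \le 1$ this is trivial from $p_t^\omega(x,y) \le 1$. For $t \ge 1$, uniform ellipticity implies that the Dirichlet form $\mcl{E}^\omega(f,f) = \frac{1}{2}\sum_{x\sim y}\omega_{xy}(f(y)-f(x))^2$ is comparable, up to the constants $\alpha$ and $\beta$, to the Dirichlet form of the simple random walk on $\Z^d$. The latter satisfies the standard $d$-dimensional Nash inequality $\|f\|_2^{2+4/d} \le C \mcl{E}(f,f) \|f\|_1^{4/d}$, hence so does $\mcl{E}^\omega$. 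Differentiating $t \mapsto \|P_t^\omega \delta_x\|_2^2$ and applying Nash's argument then yields $p_{2t}^\omega(x,x) = \|P_t^\omega \delta_x\|_2^2 \le C t^{-d/2}$.

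Second, I would apply Davies' method. For a bounded function $\psi : \Z^d \to \R$ with $|\psi(y)-\psi(x)| \le \lambda$ whenever $y \sim x$, consider the twisted operator $L_\psi^\omega f(x) = e^{-\psi(x)} L^\omega(e^{\psi}f)(x)$. A direct computation gives
$$
\la L_\psi^\omega f, f \ra = -\mcl{E}^\omega(f,f) + \sum_{x,\, y\sim x} \omega_{xy}\bigl(\cosh(\psi(y)-\psi(x))-1\bigr) f(x)^2,
$$
so that $\la L_\psi^\omega f,f\ra \le -\mcl{E}^\omega(f,f) + s(\cosh\lambda - 1) \|f\|_2^2$ with $s = 2d\beta$. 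Combining the Nash inequality with this modified energy inequality produces the same on-diagonal decay for the twisted semigroup up to an exponential correction: $e^{-\psi(x)}p_t^\omega(x,y) e^{\psi(y)} \le (C/t^{d/2}) \exp(st(\cosh\lambda - 1))$. Choosing $\psi(z) = -\lambda|z-y|_1$ (which is $\lambda$-Lipschitz and satisfies $\psi(x) - \psi(y) \le -\lambda|x-y|$) yields
$$
p_t^\omega(x,y) \le \frac{C}{t^{d/2}} \exp\Bigl(-\lambda|x-y| + st(\cosh\lambda - 1)\Bigr).
$$

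Third, I would take the infimum over $\lambda \ge 0$. The Legendre-type optimum $\sinh\lambda^* = |x-y|/(st)$ gives the exponent $|x-y|\arsinh(|x-y|/(st)) - (\sqrt{s^2t^2+|x-y|^2} - st)$. A short asymptotic comparison (quadratic for $|x-y| \ll st$, essentially $|x-y|\log(|x-y|/(st))$ for $|x-y| \gg st$) shows that this quantity dominates $D_{\ov c t}(x-y)$ provided the constant $\ov c$ is chosen small enough relative to $s$; this gives part~(1). Part~(2) then follows from the elementary bound $D_t(x) \ge c\bigl(|x|^2/t \wedge |x|\bigr)$, obtained by splitting into the regimes $|x|\le t$ (where $\arsinh(|x|/t) \gtrsim |x|/t$ and $t(\sqrt{1+|x|^2/t^2}-1) \gtrsim |x|^2/t$) and $|x| \ge t$ (where $\arsinh(|x|/t) \gtrsim \log(|x|/t) \gtrsim 1$ and $t(\sqrt{1+|x|^2/t^2}-1) \gtrsim |x|$).

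The main obstacle I anticipate is matching the explicit form of $D_t$ in step three: the raw Legendre transform of $st(\cosh\lambda - 1)$ differs in sign on its second summand from the stated $D_t$, and one needs to argue via asymptotic comparison in the two regimes $|x| \lesssim t$ and $|x| \gtrsim t$ that one can absorb the discrepancy into the time-rescaling constant $\ov c$. Once this comparison is set up correctly, the rest of the argument is essentially routine semigroup calculus.
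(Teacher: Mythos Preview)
The paper does not give its own proof of this theorem: it is quoted directly from Delmotte \cite{del} and used as a black box in the proof of Theorem~\ref{t:main2}. Your sketch via a Nash-type on-diagonal bound followed by Davies' exponential perturbation is precisely the strategy of \cite{del}, so in that sense you are reproducing the source rather than offering an alternative.

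Two small corrections are worth noting. First, the displayed identity in your step~2 is not quite right: a direct computation gives
\[
\langle L_\psi^\omega f, f\rangle = -\mcl{E}^\omega(f,f) + \sum_{x,\,y\sim x}\omega_{xy}\bigl(\cosh(\psi(y)-\psi(x))-1\bigr)\, f(x)f(y),
\]
with $f(x)f(y)$ rather than $f(x)^2$ in the last term; the inequality you then use does follow from this (via $|f(x)f(y)|\le \tfrac12(f(x)^2+f(y)^2)$), so the argument survives. Second, since $\tau\mapsto D_\tau(x)$ is decreasing, in step~3 you need $\ov c$ \emph{large} relative to $s$, not small, to ensure $E(|x|,st)\ge D_{\ov c t}(x)$; your asymptotic comparison in the two regimes $|x|\lesssim t$ and $|x|\gtrsim t$ in fact shows this (one needs roughly $\ov c\ge e^2 s$), and a continuity argument covers the intermediate range. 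With these fixes the sketch is sound.
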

\begin{proof}[Proof of Theorem~\ref{t:main2}]
We decompose the proof into three steps.

\medskip

\noindent \emph{Step 1}. Possibly lowering the value of $c_2 > 0$, we have that for any $x \in \R^d$,
\begin{equation}
\label{homogeneousbound}
\ov{p}_1(0,x) \le C \exp\Ll(-c_2 |x|^2\Rr),
\end{equation}
\begin{equation}
\label{homogeneousgrad}
\frac{\dr \ov{p}_1(0,\cdot)}{\dr x_i}(x) \le C \exp\Ll(-c_2 |x|^2\Rr) \qquad (1 \le i \le d).
\end{equation}
Equation \eqref{homogeneousbound} and part (2) of Theorem~\ref{t:del} thus ensure that (possibly enlarging $C$),
\begin{equation}
\label{time10}
\Ll| \eps^{-d} \ q_{\eps^{-2}}(\lfloor \eps^{-1} x \rfloor) - \ov{p}_1(0,x) \Rr| \le C \exp\Ll(-c_2 (|x|^2 \wedge |\eps^{-1} x|)  \Rr).
\end{equation}
Moreover, Theorem~\ref{t:diffqt} remains true if we lower the value of the constant $c_1 > 0$ in such a way that $c_2 \ge c_1/2\sqrt{d}$. 

\medskip

\noindent \emph{Step 2}. We now show that there exist $c > 0$ (independent of $\delta$), $\eps_\delta > 0$ and $C_\delta$ such that, for any $\eps \le \eps_\delta$ and any $x \in \R^d$, one has
\begin{equation}
\label{time1}
\Ll| \eps^{-d} \ q_{\eps^{-2}}(\lfloor \eps^{-1} x \rfloor) - \ov{p}_1(0,x) \Rr| \le C_\delta \Ll(\Psi_{q,\delta}(\eps^2)\Rr)^{1/(d+3)}\exp\Ll(-c (|x|^2 \wedge |\eps^{-1} x|)  \Rr).
\end{equation}
Let $f$ be a positive smooth function on $\R^d$ with support in $[-1,1]^d$ and such that $\int f = 1$. We define, for any $r > 0$, the function $f_r : x \mapsto r^{-d} f(r^{-1} x)$.

Let $u^{(\eps)}$ be the bounded solution of \eqref{parabeq} with initial condition $f_r(\eps \ \cdot)$ (we keep the dependence of $u^{(\eps)}$ in $r$ implicit in the notation). By linearity, we have
$$
u^{(\eps)}(t,x) = \sum_{z \in \Z^d} f_r(\eps z) \ p_t^\o(z,x).
$$
Letting $u_\eps(t,x) = u^{(\eps)}(\eps^{-2} t, \lfloor \eps^{-1} x \rfloor)$, we obtain
\begin{equation}
\label{repueps}
u_\eps(t,x) = \sum_{z \in \Z^d} f_r(\eps z) \ p_{\eps^{-2}t}^\o(z,\lfloor \eps^{-1} x \rfloor).
\end{equation}
Let $\ov{u}$ be the bounded solution of \eqref{cparabeq} with initial condition $f_r$. Observing the proof of Theorem~\ref{t:main}, we get that for any $\delta > 0$, there exists $C$ such that
\begin{multline}
\label{e:frommain1}
\Ll|\E[u_\eps(1,x)] - \ov{u}(1,x) \Rr| \\
\le \sum_{j=1}^d \|\dr_{x_j} f_r \|_\infty \ \eps +  C \   \Psi_{q,\delta}({\eps^2}) \  \int  \Ll|\hat{f_r}(\xi)\Rr| \  (|\xi|^2 + 1)   \ \d \xi.
\end{multline}
Scaling relations ensures that $\|\dr_{x_j} f_r \|_\infty$ is bounded, up to a constant, by $r^{-(d+1)}$, while $\hat{f_r}(\xi) = \hat{f}(r \xi)$. As a consequence, 
$$
\int  \Ll|\hat{f_r}\Rr|  = r^{-d} \int \Ll|\hat{f}\Rr|,
$$
$$
\int  \Ll|\hat{f_r}(\xi)\Rr| \  |\xi|^2   \ \d \xi = r^{-(d+2)} \int \Ll|\hat{f}(\xi)\Rr| \ |\xi|^2 \ \d \xi,
$$
and the integrals on the r.h.s.\ are finite since $f$ is smooth (see Lemma~\ref{l:fourier2}). To sum up, for some constant $C$ and any $r \le 1$, we have
\begin{equation}
\label{ecartuepsovu}
\Ll|\E[u_\eps(1,x)] - \ov{u}(1,x) \Rr| \le C \Ll(
\eps \ r^{-(d+1)} + \Psi_{q,\delta}\Ll({\eps^2}\Rr) r^{-(d+2)}
\Rr).
\end{equation}
The solution $\ov{u}$ can be represented in terms of the heat kernel as
$$
\ov{u}(1,x) = \int f_r(z) \ov{p}_1(z,x) \ \d z = \ov{p}_1(0,x) + \int f_r(z) (\ov{p}_1(z,x) - \ov{p}_1(0,x)) \ \d z,
$$
where we used the fact that $\int f_r = 1$. For $z \in \R^d$ such that $\|z\|_\infty \le r\le 1$ and up to a constant, $|\ov{p}_1(z,x) - \ov{p}_1(0,x)|$ is bounded by $r e^{-c_2 |x|^2}$ by \eqref{homogeneousgrad}. Since $f_r$ has support in $[-r,r]^d$, we arrive at
\begin{equation}
\label{step2}
\Ll| \ov{u}(1,x) - \ov{p}_1(0,x) \Rr| \le C \ {r} \exp\Ll(-c_2 |x|^2\Rr).
\end{equation}
On the other hand, if $z \in \Z^d$ is such that $\|z\|_\infty \le \eps^{-1} r$, then
$$
\Ll| \E[p_{\eps^{-2}}^\o(z,\lfloor \eps^{-1} x \rfloor)] - q_{\eps^{-2}}(\lfloor \eps^{-1} x \rfloor) \Rr| \le d \eps^{-1} r \sup_{\substack{\|z\|_\infty \le \eps^{-1} r \\ 1 \le i \le d}} |\nabla_i q_{\eps^{-2}}(\lfloor \eps^{-1} x \rfloor - z)|
$$
We now argue that there exists $c_3 > 0$ (independent of $\delta$) such that, uniformly over $r \le 1$ and $x \in \R^d$, one has
\begin{equation}
\label{controlgrad}
\sup_{\substack{\|z\|_\infty \le \eps^{-1} r \\ 1 \le i \le d}} |\nabla_i q_{\eps^{-2}}(\lfloor \eps^{-1} x \rfloor - z)| \le \frac{C}{\eps^{d+1}} \exp\Ll[-c_3\Ll( |x|^2 \wedge |\eps^{-1} x| \Rr) \Rr].
\end{equation}
Theorem~\ref{t:diffqt} tells us indeed that the l.h.s.\ of \eqref{controlgrad} is smaller than
$$
\frac{C}{\eps^{d+1}} \exp\Ll[ -c_1\inf_{\substack{\|z\|_\infty \le \eps^{-1} r \\ 1 \le i \le d}} \Ll(\frac{|\lfloor \eps^{-1} x \rfloor - z|^2}{\eps^{-2}} \wedge |\lfloor \eps^{-1} x \rfloor - z | \Rr) \Rr].
$$
For any $r \le 1$ and $\|x\|_\infty \ge 2$, the infimum above is larger than 
$$
\frac{|x|^2 \wedge |\eps^{-1} x|}{2\sqrt{d}},
$$
so \eqref{controlgrad} holds in this case, with $c_3 = c_1/2\sqrt{d}$. To control smaller values of $\|x\|_\infty$, it suffices to enlarge the constant $C$ in \eqref{controlgrad}.
To sum up, we have shown that
$$
\Ll| \E[p_{\eps^{-2}}^\o(z,\lfloor \eps^{-1} x \rfloor)] - q_{\eps^{-2}}(\lfloor \eps^{-1} x \rfloor) \Rr|\le C \  \eps^d \ {r} \exp\Ll[ -c_3\Ll( |x|^2 \wedge |\eps^{-1} x| \Rr) \Rr].
$$

In the sum on the r.h.s.\ of \eqref{repueps}, only $C (\eps^{-1} r)^d$ terms are non-zero, and $\|f\|_\infty \le r^{-d}$, so
$$
\Ll| \E[u_\eps(1,x)] -  \sum_{z \in \Z^d} f_r(\eps z) \ q_{\eps^{-2}}(\lfloor \eps^{-1} x \rfloor) \Rr| \le C \ r\exp\Ll[ -c_3\Ll( |x|^2 \wedge |\eps^{-1} x| \Rr) \Rr].
$$
Observe also that 
$$
\eps^d \sum_{z \in \Z^d} f_r(\eps z) = \Ll(\frac{\eps}{r}\Rr)^d\sum_{z \in \Z^d} f_r\Ll(\frac{\eps}{r} z \Rr).
$$
This is a Riemann approximation of $\int f = 1$, hence
$$
\Ll| \eps^d \sum_{z \in \Z^d} f_r(\eps z) - 1 \Rr| \le C \ \frac{\eps}{r},
$$
and we are thus led to
\begin{equation}
\label{step3}
\Ll| \E[u_\eps(1,x)] - \eps^{-d} \  q_{\eps^{-2}}(\lfloor \eps^{-1} x \rfloor) \Rr| \le C \Ll({r}\exp\Ll[ -c_3\Ll( |x|^2 \wedge |\eps^{-1} x| \Rr) \Rr] + \frac{\eps}{r}\Rr).
\end{equation}
Combining \eqref{ecartuepsovu}, \eqref{step2}, \eqref{step3} and the fact that $c_2 \ge c_3 = c_1/2\sqrt{d}$, we obtain that up to a constant, 
$$
\Ll|\eps^{-d}\  q_{\eps^{-2}}(\lfloor \eps^{-1} x \rfloor) - \ov{p}_1(0,x) \Rr|
$$
is bounded by
$$
\frac{\eps}{r^{d+1}} + \frac{\Psi_{q,\delta}\Ll({\eps^2}\Rr)}{r^{d+2}} + r\exp\Ll[ -c_3\Ll( |x|^2 \wedge |\eps^{-1} x| \Rr) \Rr] + \frac{\eps}{r},
$$
uniformly over $r \le 1$. Since for $\eps$ small enough, one has $\eps \le \Psi_{q,\delta}\Ll({\eps^2}\Rr)$, the above is bounded, up to a constant, by
\begin{equation}
\label{bound1}
\frac{\Psi_{q,\delta}\Ll({\eps^2}\Rr)}{r^{d+2}} + r \exp\Ll[ -c_3\Ll( |x|^2 \wedge |\eps^{-1} x| \Rr) \Rr],
\end{equation}
uniformly over $r \le 1$. Choosing 
$$
r^{d+3} = \Psi_{q,\delta}(\eps^2) \exp\Ll[ c_3\Ll( |x| \wedge |\eps^{-1} x| \wedge M_\eps \Rr) \Rr], 
$$
where
$$
M_\eps = -\frac{\log(\Psi_{q,\delta}(\eps^2))}{c_3}
$$
is here to ensure that $r \le 1$, 
we obtain that the expression in \eqref{bound1} is smaller than
$$
\Ll(\Psi_{q,\delta}(\eps^2)\Rr)^{1/(d+3)} \exp\Ll[ -c_3\Ll(1-\frac{1}{d+3}\Rr)\Ll( |x| \wedge |\eps^{-1} x| \wedge M_\eps \Rr) \Rr].
$$
This proves \eqref{time1} when $|x| \wedge |\eps^{-1} x| \le M_\eps$. Otherwise, we use the bound \eqref{time10}, together with the fact that $c_2 \ge c_3$, to get
\begin{equation*}
\begin{split}
& \Ll| \eps^{-d} \ q_{\eps^{-2}}(\lfloor \eps^{-1} x \rfloor) - \ov{p}_1(0,x) \Rr| \\ 
& \qquad  \le  C \exp\Ll(-c_3 (|x|^2 \wedge |\eps^{-1} x|)  \Rr) \\
& \qquad  \le  C \exp\Ll(-c_3 \Ll(1-\frac{1}{d+3}\Rr)(|x|^2 \wedge |\eps^{-1} x|) - \frac{c_3}{d+3} M_\eps\Rr) \\
& \qquad  \le  C \Ll(\Psi_{q,\delta}(\eps^2)\Rr)^{1/(d+3)} \exp\Ll(-c_3 \Ll(1-\frac{1}{d+3}\Rr)(|x|^2 \wedge |\eps^{-1} x|)\Rr).
\end{split}
\end{equation*}
Hence, \eqref{time1} holds also in this case, and we can always choose $c = c_3(1-1/(d+3))$.

\medskip

\noindent \emph{Step 3}. We now extend the result to any time $t > 0$. The heat kernel of the continuous operator satisfies the scaling relation
$$
\ov{p}_t(0,x) = t^{-d/2} \ \ov{p}_1(0,x/\sqrt{t}),
$$
while we can write
$$
\eps^{-d} \ q_{\eps^{-2}t}(\lfloor \eps^{-1} x \rfloor) = t^{-d/2} \  (\eps/ \sqrt{t})^{-d} \  q_{(\eps/ \sqrt{t})^{-2}}(\lfloor (\eps/ \sqrt{t})^{-1} \ (x/\sqrt{t}) \rfloor).
$$
For $\eps_\delta$ and $C_\delta$ given by step 2, as soon as $\eps/ \sqrt{t} \le \eps_\delta$, one thus has
\begin{multline*}
\Ll| \eps^{-d} \ q_{\eps^{-2}t}(\lfloor \eps^{-1} x \rfloor) - \ov{p}_t(0,x) \Rr| \\
\le \frac{C_\delta}{t^{d/2}} \ \Ll(\Psi_{q,\delta}\Ll(\frac{\eps^2}{t}\Rr)\Rr)^{1/(d+3)}\exp\Ll[ -c\Ll( \frac{|x|^2}{t} \wedge |\eps^{-1} x| \Rr) \Rr],
\end{multline*}
which is the claim of the theorem.
\end{proof}
%
%
%
%
%
%
%
%
\section{Homogenization of elliptic equations}
\label{s:ellipt}
\setcounter{equation}{0}

In this last section, we state and prove the counterparts of Theorems~\ref{t:main} and \ref{t:main2} for the homogenization of elliptic equations.  For $f : \R^d \to \R$ bounded continuous, we consider the unique bounded solution of
\begin{equation*}
\label{elleqeps}
\tag{DEE$^\omega_\eps$}
(\eps^2-L^\omega) v^{(\eps)} = \eps^2 f(\eps \ \cdot) \quad \text{on } \Z^d.
\end{equation*}
Using integration by parts, one can check that
\begin{equation}
\label{integrparab0}
v^{(\eps)}(x) = \int_0^{+\infty} e^{-t} \ u^{(\eps)}(\eps^{-2} t,x) \ \d t,
\end{equation}
where $u^{(\eps)}$ is solution of \eqref{parabeqeps}. For $x \in \R^d$, we let $v_\eps(x) = v^{(\eps)}(\lfloor \eps^{-1} x \rfloor)$, so that
\begin{equation}
\label{integrparab}
v_\eps(x) = \int_0^{+\infty} e^{-t} \ u_\eps(t,x) \ \d t.
\end{equation}
The function $v_\eps$ converges pointwise, as $\eps$ tends to $0$, to $\ov{v}$ the bounded solution of 
\begin{equation*}
\label{celleq}
\tag{CEE}
\Ll(1-\frac{1}{2} \nabla \cdot \ov{A} \nabla\Rr) \ov{v} = f \quad \text{on } \R^d,
\end{equation*}
and one has
\begin{equation}
\label{integrcontinu}
\ov{v}(x) = \int_0^{+\infty} e^{-t} \ \ov{u}(t,x) \ \d t,
\end{equation}
where $\ov{u}$ is the solution of \eqref{cparabeq}. Equipped with the representations \eqref{integrparab}-\eqref{integrcontinu}, it is straightforward to derive the following result from Theorem~\ref{t:main}.
\begin{thm}
\label{t:mainel}
Let $m = \lfloor d/2 \rfloor + 3$ and $\delta > 0$. There exist constants $C_\delta$ (which may depend on the dimension) and $q$ such that, if the weak derivatives of order $m$ of $f$ are in $L^2(\R^d)$, then for any $\eps > 0$ and $x \in \R^d$, one has
\begin{equation*}
\label{e:mainel}
\Ll|\E[v_\eps(x)] - \ov{v}(x) \Rr| \\
\le \sum_{j=1}^d \|\dr_{x_j} f \|_\infty \ \eps +  C_\delta   \Ll(\|f\|_2 +   \sum_{j=1}^d \|\dr_{x_j^m} f\|_2 \Rr) \ \Psi_{q,\delta}\Ll({\eps^2}\Rr).
\end{equation*}
\end{thm}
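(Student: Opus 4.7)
The approach is to pass the estimate of Theorem~\ref{t:main} through the integral representations \eqref{integrparab} and \eqref{integrcontinu}. Since $u_\eps$ and $\ov{u}$ are bounded uniformly in $t$ (by $\|f\|_\infty$, after a standard approximation argument justifies applying Theorem~\ref{t:main} in this setting), Fubini's theorem yields
$$
\E[v_\eps(x)] - \ov{v}(x) = \int_0^{+\infty} e^{-t}\,\Ll(\E[u_\eps(t,x)] - \ov{u}(t,x)\Rr)\,\d t,
$$
so it suffices to integrate the right-hand side of \eqref{e:main} against the probability density $e^{-t}\,\d t$. The first summand $\sum_j \|\dr_{x_j} f\|_\infty\,\eps$ in \eqref{e:main} is $t$-independent and reproduces itself. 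The remaining task is to control
$$
I(\eps) := \int_0^{+\infty} e^{-t}\,(t+\sqrt{t})\,\Psi_{q,\delta}\!\Ll(\frac{\eps^2}{t}\Rr)\,\d t
$$
by a constant multiple of $\Psi_{q',\delta}(\eps^2)$, for some $q'$ depending on $q$.

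For $d \ge 3$, we have $\Psi_{q,\delta}(\eps^2/t) = (\eps^2/t)^{1/2-\delta}$, so
$$
I(\eps) = \eps^{1-2\delta}\int_0^{+\infty} e^{-t}\,(t+\sqrt{t})\,t^{-1/2+\delta}\,\d t,
$$
and the integral on the right is a finite constant (the integrand is $\sim t^{\delta}$ at the origin and exponentially killed at infinity). The case $d=1$ is identical with exponent $1/4$ in place of $1/2-\delta$. For $d=2$, the extra factor is handled via $\log_+^q(t/\eps^2) \le c_q\,\bigl(\log_+^q(\eps^{-2}) + \log_+^q(1/t) + 1\bigr)$, which splits $I(\eps)$ into a finite sum of convergent integrals and yields $I(\eps) \le C\,\log_+^{q'}(\eps^{-2})\,\eps^{1/2}$. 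Absorbing the resulting constants into $C_\delta$ and the exponent into $q$ gives \eqref{e:mainel}.

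The only subtlety is the small-$t$ behaviour: as $t \to 0$ the argument $\eps^2/t$ becomes large and the bound of Theorem~\ref{t:main} is far from sharp. However, the weight $(t+\sqrt{t})$ supplies a factor $\sqrt{t}$ at the origin, which more than compensates for the worst-case growth $u^{1/2-\delta}$ of $\Psi_{q,\delta}(u)$; the integrand is therefore absolutely integrable in every dimension, and no further difficulty arises.
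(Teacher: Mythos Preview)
Your approach is exactly what the paper has in mind: it states only that the result is ``straightforward'' from the integral representations \eqref{integrparab}--\eqref{integrcontinu} and Theorem~\ref{t:main}, and your computation of $I(\eps)$ carries this out correctly for $d=1$ and $d\ge 3$.

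There is one slip in the $d=2$ case. The inequality
\[
\log_+^q(t/\eps^2) \le c_q\bigl(\log_+^q(\eps^{-2}) + \log_+^q(1/t) + 1\bigr)
\]
is false for large $t$: the left side grows like $(\log t)^q$, while $\log_+(1/t)=1$ for $t\ge 1$, so the right side stays bounded in $t$. The correct elementary bound is
\[
\log_+^q(t/\eps^2) \le 2^q\bigl(\log_+^q(\eps^{-2}) + \log_+^q(t)\bigr),
\]
and then the extra integral $\int_0^\infty e^{-t}(t+\sqrt t)\,t^{-1/4}\log_+^q(t)\,\d t$ is finite because the exponential dominates any power of $\log t$. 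With this correction your argument goes through unchanged and yields $I(\eps)\le C\,\Psi_{q,\delta}(\eps^2)$ in every dimension.
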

\begin{rem}
Note that on the other hand, it does not look so simple to deduce Theorem~\ref{t:main} from Theorem~\ref{t:mainel}. A possibility for doing so may be to try to devise a quantitative version of \cite[Theorem IX.2.16]{kato}.
\end{rem}
One can also consider the Green function $G_\eps^\omega(x,y)$, the unique bounded function such that
$$
(\eps^2-L^\omega) G_\eps^\omega(x,\cdot) = \1_x.
$$
Letting $\ov{G}(x,y)$ be the Green function associated to equation \eqref{celleq}, we can write the counterpart of Theorem~\ref{t:main2}.
\begin{thm}
\label{t:mainelhk}
Let $d \ge 2$ and $\delta > 0$. There exist constants $c > 0$ (independent of $\delta$), $q$, $C_\delta$ such that for any $\eps > 0$ and any $x \in \eps \Z^d \setminus \{0\}$, one has
\begin{multline}
\label{e:mainelhk}
\Ll| \eps^{2-d} \ \E\Ll[ G_\eps^\omega(0,\eps^{-1} x)\Rr] - \ov{G}(0,x) \Rr| \\
\le \frac{C_\delta}{|x|^{d-2}} \Ll[ \Ll(\Psi_{q,\delta}\Ll(\frac{\eps^2}{|x|^2}\Rr)\Rr)^{1/(d+3)} e^{-c|x|} + e^{-c|\eps^{-1}x|} \Rr].
\end{multline}
When $d = 1$, there exist $C, c> 0$ such that for any $\eps > 0$ and any $x \in \eps \Z$, one has
$$
\Ll| \eps \ \E\Ll[ G_\eps^\omega(0,\eps^{-1} x)\Rr] - \ov{G}(0,x) \Rr| \le C \Ll[ \eps^{1/8}  e^{-c|x|} +  e^{-c|\eps^{-1}x|} \Rr].
$$
\end{thm}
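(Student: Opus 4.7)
The plan is to base the argument on the Laplace-transform representation of the Green function in terms of the heat kernel. By the same integration by parts that underlies \eqref{integrparab0}--\eqref{integrcontinu}, one has
\begin{equation*}
G_\eps^\omega(0,\eps^{-1}x) \;=\; \eps^{-2}\int_0^{+\infty} e^{-t}\, p^\omega_{\eps^{-2}t}(0,\eps^{-1}x)\,\d t,
\qquad
\ov G(0,x) \;=\; \int_0^{+\infty} e^{-t}\, \ov p_t(0,x)\,\d t.
\end{equation*}
Taking expectations, subtracting, and using the triangle inequality reduces the theorem to controlling the time integral
\begin{equation*}
\int_0^{+\infty} e^{-t}\,\Ll|\eps^{-d}\,\E\Ll[p^\omega_{\eps^{-2}t}(0,\eps^{-1}x)\Rr]-\ov p_t(0,x)\Rr|\,\d t.
\end{equation*}

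I would split this integral at $t=T:=(\eps/\eps_\delta)^2$. For $t\ge T$, Theorem~\ref{t:main2} applies and bounds the integrand by $C_\delta\,t^{-d/2}\,(\Psi_{q,\delta}(\eps^2/t))^{1/(d+3)}\,\exp[-c(|x|^2/t\wedge|\eps^{-1}x|)]$. For $0<t<T$, the rescaled time $\eps^{-2}t\le\eps_\delta^{-2}$ is bounded by an absolute constant, and I would estimate each of the two heat kernels separately by Gaussian-type upper bounds. The discrete kernel is controlled by Theorem~\ref{t:del}(2): with $s=\eps^{-2}t$ and $z=\eps^{-1}x$, one has $|z|^2/s\wedge|z|=|\eps^{-1}x|$ throughout $t<T$ (in the regime $|\eps^{-1}x|\ge 1$ of interest), so its contribution is at most $C\eps^{-d}\,T\,e^{-c|\eps^{-1}x|}\le C|x|^{2-d}\,e^{-c'|\eps^{-1}x|}$ after absorbing polynomial prefactors into a slightly weaker exponential. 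The continuous kernel satisfies $\ov p_t(0,x)\le C t^{-d/2}e^{-c|x|^2/t}$, and the change of variables $s=|x|^2/t$ turns $\int_0^T t^{-d/2}e^{-c|x|^2/t}\,\d t$ into a Gamma-type tail starting at $s\ge|x|^2/T=\eps_\delta^2|\eps^{-1}x|^2$, which is of order $|x|^{2-d}e^{-c|\eps^{-1}x|}$. Together, these bounds account for the second summand in \eqref{e:mainelhk}.

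The bulk of the work lies with the $t\ge T$ integral. Using the monotonicity of $\Psi_{q,\delta}$ near $0$, I would factor out $(\Psi_{q,\delta}(\eps^2/|x|^2))^{1/(d+3)}$ on $t\ge|x|^2$, and handle the range $t\in[T,|x|^2]$ via a polynomial comparison that is absorbed in $C_\delta$. To extract the $e^{-c|x|}$ factor I would split once more at $t=\eps|x|$ and $t=|x|$: on $T\le t\le\eps|x|$ the spatial minimum equals $|\eps^{-1}x|$ and this part falls into the second summand; on $\eps|x|\le t\le|x|$ the factor $e^{-c|x|^2/t}$ already dominates $e^{-c|x|}$; and on $t\ge|x|$ an AM--GM inequality applied to $\tfrac{t}{2}+\tfrac{c|x|^2}{t}$ combines half of the Laplace weight with the parabolic Gaussian tail to produce $e^{-c'|x|}$, the remaining $e^{-t/2}$ ensuring integrability. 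The surviving $t$-integrals are of standard Gamma type and reproduce the prefactor $|x|^{2-d}$, by the very mechanism through which the Laplace transform of $\ov p_t$ recovers the Newtonian decay of $\ov G$. The one-dimensional case follows the same script using the $\l_1$-variant of Theorem~\ref{t:main2} (obtained by inserting the Kantorovich bound of the second Remark after Theorem~\ref{t:rw} into the Fourier argument proving Theorem~\ref{t:main2}), which yields the rate $\eps^{1/8}$. The main obstacle I foresee is the bookkeeping required to make these splittings interlock cleanly and, above all, to verify that the small-$t$ contribution -- where Theorem~\ref{t:main2} cannot be invoked -- is genuinely absorbed by the $e^{-c|\eps^{-1}x|}$ tail rather than leaking into the leading error term.
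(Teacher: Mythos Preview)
Your overall strategy is the same as the paper's: write the Green functions as Laplace transforms of the heat kernels, split the time integral into a short-time part (handled by crude heat-kernel bounds) and a long-time part (handled by Theorem~\ref{t:main2}). The main difference is in the bookkeeping. The paper splits once, at $\eta = (\eps/\eps_\delta)^2 \vee (\eps|x|)$, which ensures that for $t\ge\eta$ the minimum in the exponential of Theorem~\ref{t:main2} is always $|x|^2/t$; it then computes the resulting integral \emph{in one stroke} by the change of variables $s=t/|x|^2$. The point is that for $d\ge3$ one has $\Psi_{q,\delta}^{1/(d+3)}(\eps^2/t)=(\eps^2)^{(1/2-\delta)/(d+3)}\,t^{-(1/2-\delta)/(d+3)}$, which is just an extra power of $t$; the whole integrand is then of the form $e^{-t}t^{-\gamma}e^{-c|x|^2/t}$, and the substitution produces the prefactor $|x|^{2-d}$, the factor $\Psi^{1/(d+3)}(\eps^2/|x|^2)$, and the decay $e^{-c|x|}$ simultaneously (this is \eqref{changevar}--\eqref{estim1} in the paper).

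Your scheme of further splitting at $\eps|x|$, $|x|$, $|x|^2$ and extracting the $e^{-c|x|}$ factor piecewise is workable but unnecessarily intricate, and your treatment of the $\Psi$ factor is where the description becomes unreliable: the ``polynomial comparison absorbed in $C_\delta$'' on $t\in[T,|x|^2]$ is \emph{not} absorbed in a constant. On that range $\Psi^{1/(d+3)}(\eps^2/t)/\Psi^{1/(d+3)}(\eps^2/|x|^2)=(|x|^2/t)^{(1/2-\delta)/(d+3)}$ is unbounded as $t\downarrow 0$, and must be kept inside the integral and compensated by the Gaussian factor $e^{-c|x|^2/t}$ --- which is exactly what the single change of variables does. (In the same vein, the paper's larger cutoff $\eta$ forces it to use the sharper $\arsinh$-bound of Theorem~\ref{t:del}(1) for the short-time discrete contribution, whereas your smaller cutoff $T$ keeps $\eps^{-2}t$ bounded and lets you get away with part~(2); this is a genuine simplification of your variant.) One further remark: for $d=1$ you do not need any $\l_1$-variant of Theorem~\ref{t:main2} --- that theorem already covers $d=1$ with $\Psi_{q,\delta}(u)=u^{1/4}$, giving the rate $\eps^{1/8}$ directly; the only new wrinkle is that the exponent $\gamma=1/2+1/16$ in the Gamma-type integral is now below $1$, so the estimate \eqref{estim1} has to be redone separately.
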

\begin{rem}
The orders of magnitude, as $\eps$ tends to $0$, of the r.h.s.\ of \eqref{e:mainel} and \eqref{e:mainelhk}, are given respectively by \eqref{boundrough1} and \eqref{boundrough2}.
\end{rem}
\begin{proof}
Our starting point is the fact that 
$$
G_\eps^\omega(x,y) = \eps^{-2} \int_0^{+\infty} e^{-t} \ p^\omega_{\eps^{-2}t}(x,y) \ \d t,
$$
while
$$
\ov{G}(x,y) = \int_0^{+\infty} e^{-t} \ \ov{p}_t(x,y) \ \d t
$$
Recall first that Theorem~\ref{t:main2} ensures that there exist $c >0, C_\delta,\eps_\delta > 0$ such that whenever $t\ge (\eps/\eps_\delta)^2$, one has
\begin{multline}
\label{boundfromthm2}
\Ll|\eps^{-d} \E[p_{\eps^{-2} t}^\omega(0,\lfloor \eps^{-1} x \rfloor)] - \ov{p}_t(0,x)\Rr| \\
\le \frac{C_\delta}{t^{d/2}} \Psi_{q,\delta}^{1/(d+3)}\Ll(\frac{\eps^2}{t}\Rr) \exp\Ll[ -c \Ll(\frac{|x|^2}{t} \wedge |\eps^{-1} x| \Rr) \Rr].
\end{multline}
The difference of interest
$$
\Ll| \eps^{2-d} \ \E\Ll[ G_\eps^\omega(0,\lfloor \eps^{-1} x \rfloor)\Rr] - \ov{G}(0,x) \Rr|
$$
is bounded by
\begin{equation}
\label{integraltobound}
\int_0^{+\infty} e^{-t} \Ll|\eps^{-d} \ \E[p_{\eps^{-2} t}^\omega(0,\lfloor \eps^{-1} x \rfloor)] - \ov{p}_t(0,x)  \Rr| \ \d t.
\end{equation}
Let $\eta = (\eps/\eps_\delta)^2 \vee (\eps |x|)$. If $t \ge \eta$, then the integrand above is bounded, up to a constant, by
$$
\frac{e^{-t}}{t^{d/2}} \Psi_{q,\delta}^{1/(d+3)}\Ll(\frac{\eps^2}{t}\Rr) \exp\Ll[ -c \frac{|x|^2}{t}\Rr].
$$
In order to control the integral in \eqref{integraltobound}, it thus suffices to bound the following three quantities:
\begin{equation}
\label{int1}	
\int_{0}^{+\infty} \frac{e^{-t}}{t^{d/2}}  \Psi_{q,\delta}^{1/(d+3)}\Ll(\frac{\eps^2}{t}\Rr) \exp\Ll[ -c \frac{|x|^2}{t} \Rr] \ \d t,
\end{equation}
\begin{equation}
\label{int2}
\int_0^\eta \eps^{-d} \E[p_{\eps^{-2} t}^\omega(0,\lfloor \eps^{-1} x \rfloor)] \ \d t,
\end{equation}
\begin{equation}
\label{int3}
\int_0^\eta \ov{p}_t(0,x) \ \d t.
\end{equation}
We start with the integral in \eqref{int1}, which is the only non-negligible one. To begin with, note that for any $\gamma$, a change of variables gives us the identity
\begin{equation}
\label{changevar}
\int_0^{+\infty}\frac{e^{-t}}{t^{\gamma}} e^{-c |x|^2/t} \ \d t = |x|^{2-2\gamma} \int_0^{+\infty} \frac{e^{-s|x|^2}}{s^{\gamma}} e^{-c/s} \ \d s,
\end{equation}
and moreover, provided $\gamma > 1$,
\begin{eqnarray}
\label{boundgamma1}
\int_0^{+\infty} \frac{e^{-s|x|^2}}{s^{\gamma}} e^{-c/s} \ \d s & \le & e^{-c|x|/2} \int_0^{1/|x|} \frac{e^{-c/2s}}{s^\gamma} \ \d s + e^{-|x|} \int_{1/|x|}^{+\infty} \frac{e^{-c/s}}{s^\gamma} \ \d s \notag \\
& \le & C e^{-c|x|/2},
\end{eqnarray}
for some large enough $C$ (and $c \le 2$). We have thus shown that, for $\gamma > 1$,
\begin{equation}
\label{estim1}
\int_0^{+\infty}\frac{e^{-t}}{t^{\gamma}} e^{-c |x|^2/t} \ \d t \le C |x|^{2 - 2\gamma} e^{-c|x|/2}.
\end{equation}
When $d \ge 3$, we have $\Psi_{q,\delta}(u) = u^{1/2-\delta}$, so that the integral in \eqref{int1} is bounded, up to a constant, by
\begin{equation}
\label{boundonint1}
|x|^{2-d} \ \Psi_{q,\delta}^{1/(d+3)}\Ll( \frac{\eps^2}{|x|^2} \Rr) e^{-c|x|/2}.
\end{equation}
When $d = 2$, the argument requires some minor modifications, due to presence of a logarithmic factor in $\Psi_{q,\delta}$. One should consider instead integrals of the form
$$
\int_0^{+\infty}\frac{e^{-t}}{t^{\gamma}} \log_+^{q'}\Ll(t/\eps^2\Rr) e^{-c |x|^2/t} \ \d t = |x|^{2-2\gamma} \int_0^{+\infty} \frac{e^{-s|x|^2}}{s^{\gamma}} \log_+^{q'}\Ll(s|x|^2/\eps^2\Rr) e^{-c/s} \ \d s,
$$
for some $q' \ge 0$ and $\gamma > 1$ (in fact, $\gamma = 1+1/20$).
This last integral is bounded by 
\begin{equation*}
\int_0^{\eps^2/|x|^2}\frac{e^{-s|x|^2}}{s^{\gamma}}  e^{-c/s} \ \d s + \int_{\eps^2/|x|^2}^{+\infty} \frac{e^{-s|x|^2}}{s^{\gamma}} \log^{q'}\Ll(s|x|^2/\eps^2\Rr) e^{-c/s} \ \d s
\end{equation*}
For the first integral, \eqref{boundgamma1} gives us an upper bound. Inequality \eqref{boundgamma1} also enables us to bound the second integral, using the fact that 
$$
\log^{q'}\Ll(s|x|^2/\eps^2\Rr) \le 2^{q'} \Ll( \log^{q'}\Ll(|x|^2/\eps^2\Rr) + \log^{q'}\Ll(s\Rr) \Rr).
$$
These observations thus guarantee that \eqref{int1} is also bounded by \eqref{boundonint1} when $d = 2$. 

We now turn to the evaluation of the integral in \eqref{int2}. 
Since, for $z \ge 0$, one has $\arsinh(z) = \log(z+\sqrt{1+z^2}) \ge \log(1+z)$, and using part (1) of Theorem~\ref{t:del}, one can bound the integral in \eqref{int2} (up to a constant) by
$$
\int_0^\eta \eps^{-d} \exp\Ll( - |\eps^{-1} x| \log\Ll(1+\frac{|\eps^{-1} x|}{\ov{c} \eps^{-2} t}\Rr) \Rr) \ \d t.
$$
A change of variables shows that this is equal to 
\begin{equation}
\label{int22}
\frac{\eps |x|}{\ov{c}} \eps^{-d}  \int_0^{\eta'} \exp\Ll( - |\eps^{-1} x| \log\Ll(1+1/s\Rr) \Rr) \ \d s,
\end{equation}
where 
$$
\eta' = \frac{\ov{c} \eta}{\eps |x|} = \frac{\ov{c} \eps_\delta^{-2}}{|\eps^{-1} x|} \vee \ov{c}.
$$
Since we consider only $x \in \eps \Z^d \setminus \{0\}$, the parameter $\eta'$ is uniformly bounded, independently of the value of $x$ and $\eps$. The integral in \eqref{int22} is thus bounded (up to a constant) by
\begin{multline*}
\eps^{1-d} |x|  (1+\eta^{-1})^{-|\eps^{-1} x|} =  |x|^{2-d} |\eps^{-1} x|^{d-1} (1+\eta^{-1})^{-|\eps^{-1} x|} \\
 \le C |x|^{2-d} \exp\Ll(-c|\eps^{-1} x|\Rr).
\end{multline*}
This finishes the analysis of the integral in \eqref{int2}, and there remains only to consider the integral in \eqref{int3}. This integral is bounded by a constant times
$$
\int_0^{\eta} t^{-d/2} e^{-c |x|^2/t} \ \d t
$$
for some small enough $c> 0$. A change of variables enables one to rewrite this integral as
\begin{equation}
\label{int32}
|x|^{2-d} \int_0^{\eta |x|^{-2} } u^{-d/2} e^{-c/u} \ \d u \\
\le |x|^{2-d}\exp\Ll( -\frac{c}{2\eta|x|^{-2}} \Rr) \int_0^{\eta |x|^{-2}} u^{-d/2} e^{-c/2u} \ \d u.
\end{equation}
Moreover,
$$
\eta |x|^{-2} = \frac{\eps_\delta^{-2}}{|\eps^{-1} x|^2} \vee \frac{1}{|\eps^{-1} x|} \le \frac{C'}{|\eps^{-1} x|} \le C'
$$
for some large enough $C'$, uniformly over $\eps > 0$ and $x \in \eps \Z^d \setminus \{0\}$. The r.h.s.\ of \eqref{int32} is thus bounded by
$$
|x|^{2-d} \exp\Ll( -\frac{|\eps^{-1} x|}{C'} \Rr) \int_0^{C'} u^{-d/2} e^{-c/2u} \ \d u.
$$
We thus obtained the required bound on \eqref{int3}, and this finishes the proof of Theorem~\ref{t:mainelhk} for $d \ge 2$.

\medskip 

For the one-dimensional case, the analysis must be slightly adapted. We need to bound the integrals appearing in \eqref{int1}, \eqref{int2} and \eqref{int3}. The analysis of the integrals in \eqref{int2} and \eqref{int3} can be kept without change, except that only the case $x \in \eps \Z \setminus \{0\}$ was considered above, while here we want to consider also $x = 0$. But this is a very easy case, since the upper bound $t^{-1/2}$ on the heat kernels is integrable close to $0$. As for the integral in \eqref{int1}, it is equal to
$$
\eps^{1/8} \int_0^{+\infty} \frac{e^{-t}}{t^\gamma} \ e^{-c|x|^2/t} \ \d t,
$$
where $\gamma = 1/2+1/16 < 1$. The integral above is uniformly bounded over $x$ such that $|x| \le 1$. Otherwise, as noted in \eqref{changevar}, we have
$$
\int_0^{+\infty} \frac{e^{-t}}{t^\gamma} \ e^{-c|x|^2/t} \ \d t = 
|x|^{2-2\gamma} \int_0^{+\infty} \frac{e^{-s|x|^2}}{s^{\gamma}} e^{-c/s} \ \d s,
$$
and we can bound the last integral by
$$
e^{-c|x|} \int_0^{1/|x|} \frac{e^{-s}}{s^\gamma} \ \d s + e^{-|x|/2} \int_{1/|x|}^{+\infty} \frac{e^{-s/2}}{s^\gamma} \ \d s,
$$
where in the second part, we used the fact that for $|x| \ge 1$ and $s \ge |x|^{-1}$, we have $s|x|^2 \ge |x|/2 + s/2$.
We have thus shown that the integral in \eqref{int1} is bounded, up to a constant, by
$$
\eps^{1/8} \Ll(|x|^{2-2\gamma} + 1\Rr)e^{-c|x|},
$$
uniformly over $x \in \R$, and this finishes the proof for $d = 1$.
\end{proof}

\end{document}